\DeclareMathOperator*{\ModR}{Mod-R}
\DeclareMathOperator*{\Hom}{Hom}
\DeclareMathOperator*{\End}{End}
\DeclareMathOperator*{\Ker}{Ker}
\DeclareMathOperator*{\Coker}{Coker}
\DeclareMathOperator*{\mSpec}{mSpec}
\DeclareMathOperator*{\Ann}{Ann}
\DeclareMathOperator*{\Add}{Add}
\DeclareMathOperator*{\Spec}{Spec}
\DeclareMathOperator*{\pd}{pd}
\DeclareMathOperator*{\ext}{Ext}
\DeclareMathOperator*{\tor}{Tor}
\setlist[enumerate]{label={(\roman*)}}%,parsep=\smallskipamount,itemsep=0pt}
\theoremstyle{plain}
\newtheorem{thm}{Theorem}
\newtheorem{cor}[thm]{Corollary}
\newtheorem{lem}[thm]{Lemma}
\newtheorem{prop}[thm]{Proposition}
\newtheorem{quest}[thm]{Question}
\theoremstyle{definition}
\newtheorem{definition}[thm]{Definition}
\newtheorem{example}[thm]{Example}
\theoremstyle{remark}
\newtheorem*{remark}{Remark}
\numberwithin{thm}{section}
\newif\ifcomments
\definecolor{MichalH}{rgb}{0.9,0.1,0.1}
\newcommand{\MichalH}[1]{{\color{MichalH}{#1}}}
\newcommand{\MichalH}[1]{}
\begin{document}

\title{Divisibility classes are seldom closed under flat covers}
\author{Michal Hrbek}
\address{Institute of Mathematics CAS, \v{Z}itn\'{a} 25, 115 67 Prague, Czech Republic}
\address{
   Charles University \\
   Faculty of Mathematics and Physics\\
   Department of Algebra \\
   Sokolovsk\'a 83\\
   186 75 Praha 8\\
   Czech Republic}
\email{hrbek@math.cas.cz}
\date{\today}
\subjclass[2010]{Primary: 13G05, 13C60, 16E30, Secondary: 13B30, 13D30}
\keywords{commutative ring, tilting class, flat cover, almost perfect domain, divisible module}
\thanks{Research supported by GAČR 17-23112S and by RVO: 6798584}
\begin{abstract}
		It is well-known that a class of all modules, which are torsion-free with respect to a set of ideals, is closed under injective envelopes. In this paper, we consider a kind of a dual to this statement --- are the divisibility classes closed under flat covers? --- and argue that this is seldom the case. More precisely, we show that the class of all divisible modules over an integral domain $R$ is closed under flat covers if and only if $R$ is almost perfect. Also, we show that if the class of all $s$-divisible modules, where $s$ is a regular element of a commutative ring $R$, is closed under flat covers then the quotient ring $R/sR$ satisfies some rather restrictive properties. The question is motivated by the recent classification \cite{HS} of tilting classes over commutative rings.
\end{abstract}

\maketitle
%\tableofcontents

\section*{Introduction}
 	It is a basic fact of the theory of torsion pairs that a class of all modules, which are torsion-free with respect to a set of ideals, is closed under injective envelopes (and this indeed characterizes the torsion-free classes of torsion pairs which are generated by cyclic modules). Here we consider a sort of dual to this question --- are classes of all modules divisible by some set of ideals closed under flat covers? Our aim is to demonstrate that this is rather far from being the general case. Before that, we discuss in the rest of this introduction how this question is motivated by the tilting theory.

	Recently, a full classification of tilting classes (that is, the Ext-orthogonal classes to large tilting modules) over commutative rings was obtained, parametrizing the classes by certain subsets of the Zariski spectrum; first for noetherian rings in \cite{APST}, and then in \cite{HS} for the general setting. An interesting point is that the approach to the classification started in the dual setting (in the sense of elementary duality) of the cotilting classes first, and only afterwards the results were transferred back to tilting classes. The important property of the duals to tilting classes is that they are closed under injective envelopes (\cite[Proposition 5.5]{HS}). This led to the following result:
	\begin{thm}\emph{(\cite[Theorem 5.3]{HS})}\label{T:zero}
		Let $R$ be a commutative ring. Then the classes dual to $n$-tilting classes are precisely the classes of form
			$$\{M \in \ModR \mid \Hom{}_R(R/I,\Omega^{-i}M)=0 ~\forall i<n, ~\forall \text{\emph{ideals} $I$}, V(I) \subseteq X_i\},$$
			where $(X_0,X_1,\ldots,X_{n-1})$ is a characteristic sequence in $\Spec(R)$.
	\end{thm}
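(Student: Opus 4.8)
The plan is to obtain this statement, which is \cite[Theorem 5.3]{HS}, from the classification of $n$-tilting classes over commutative rings, transported through elementary duality. Recall first that every $n$-tilting class is of finite type, so by the classification (\cite{APST} in the noetherian case, \cite{HS} in general) an $n$-tilting class $\mathcal{T}$ is of the form $\mathcal{T}=\mathcal{S}^{\perp_{\geq 1}}$ for a set $\mathcal{S}$ of finitely presented modules of projective dimension at most $n$, and the sets $\mathcal{S}$ occurring are, up to the cotorsion pair they generate, parametrized by characteristic sequences $\vec{X}=(X_0,\dots,X_{n-1})$ in $\Spec(R)$ via $\mathcal{T}=\mathcal{T}_{\vec{X}}=\{N\in\ModR\mid \tor^R_i(R/I,N)=0 \text{ for all } i<n \text{ and all ideals } I \text{ with } V(I)\subseteq X_i\}$. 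The class dual to $\mathcal{T}_{\vec{X}}$ — the associated $n$-cotilting class of cofinite type — can be realized as $\mathcal{C}=\{M\in\ModR\mid M^+\in\mathcal{T}_{\vec{X}}\}$, where $M^+=\Hom_{\mathbb{Z}}(M,\mathbb{Q}/\mathbb{Z})$. Since every characteristic sequence arises from an $n$-tilting class, it then suffices to identify this $\mathcal{C}$ with the class in the displayed formula.

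To do so I would unwind the condition $M^+\in\mathcal{T}_{\vec{X}}$ using the character-module adjunctions. A Pontryagin-duality argument (monomorphisms becoming epimorphisms) shows that $\Hom_R(R/I,M)=0$, i.e. $M$ is $I$-torsion-free, precisely when $M^+$ is $I$-divisible, i.e. $\tor^R_0(R/I,M^+)=M^+/IM^+=0$; this handles the degree-zero contribution ($X_0$). For the higher ones I would use, for a finitely presented module $N$, the natural isomorphism $\tor^R_i(N,M^+)\cong\ext^i_R(N,M)^+$, so that $\tor^R_i(R/J,M^+)=0$ is equivalent to $\ext^i_R(R/J,M)=0$ whenever $J$ is finitely generated. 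Reducing the quantifier ``all ideals $I$ with $V(I)\subseteq X_i$'' to such finitely generated ideals (so that these formulas apply) is exactly where the finite-type property and the Thomason-type conditions built into the notion of a characteristic sequence are needed; this bookkeeping is the point I expect to be the most delicate, and for it I would lean on \cite{HS}. The upshot is the intermediate description $\mathcal{C}=\{M\mid \ext^i_R(R/I,M)=0 \text{ for } i<n,\ V(I)\subseteq X_i\}$.

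It then remains to recast the higher $\ext$-conditions as the cosyzygy conditions $\Hom_R(R/I,\Omega^{-i}M)=0$, which I would do by a simultaneous induction on $i$ for the two descriptions. Fix an ideal $I$ with $V(I)\subseteq X_i$. Since a characteristic sequence is \emph{nested}, $X_0\supseteq X_1\supseteq\cdots\supseteq X_{n-1}$, we also have $V(I)\subseteq X_j$ for all $j<i$, so the inductive hypothesis gives $\Hom_R(R/I,\Omega^{-j}M)=0$ for all $j<i$. Dimension shifting along the cosyzygy sequences $0\to\Omega^{-(j-1)}M\to E(\Omega^{-(j-1)}M)\to\Omega^{-j}M\to 0$ yields $\ext^i_R(R/I,M)\cong\ext^1_R(R/I,\Omega^{-(i-1)}M)\cong\Coker\bigl(\Hom_R(R/I,E(\Omega^{-(i-1)}M))\to\Hom_R(R/I,\Omega^{-i}M)\bigr)$, and the outgoing term vanishes by the elementary observation that if $N\hookrightarrow E(N)$ is an essential extension with $\Hom_R(R/I,N)=0$, then $\Hom_R(R/I,E(N))=0$ — any element of $E(N)$ killed by $I$ generates a submodule meeting $N$ nontrivially, all of whose elements are then killed by $I$. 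Applying this with $N=\Omega^{-(i-1)}M$ gives $\ext^i_R(R/I,M)\cong\Hom_R(R/I,\Omega^{-i}M)$, so the membership conditions at level $i$ coincide, completing the induction and the identification of $\mathcal{C}$.

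The main obstacle is twofold: the argument rests entirely on the (hard) classification of tilting classes, and, granting that, the genuinely fiddly remaining point is matching the ``for all ideals $I$'' quantifier across the tilting description, the character-module formulas (which demand finitely presented arguments), and the final cosyzygy description — a matching that uses the precise combinatorics of characteristic sequences. By contrast, the cosyzygy recasting itself is clean once one sees that the nestedness of $\vec{X}$ supplies exactly the lower-degree torsion-freeness needed to trivialize the injective terms in the dimension shift; this is also the conceptual reason why the resulting class is closed under injective envelopes, in agreement with \cite[Proposition 5.5]{HS}.
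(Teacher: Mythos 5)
The first thing to say is that the paper does not prove this statement at all: Theorem~\ref{T:zero} is quoted verbatim from \cite[Theorem 5.3]{HS} as background, so there is no internal proof to compare yours against. Judged on its own terms, your outline is sound where it is self-contained, but the two items you defer are not of equal weight, and one of them is the actual substance of the theorem.

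The part you carry out in detail --- converting the Ext-vanishing description into the cosyzygy description $\Hom_R(R/I,\Omega^{-i}M)=0$ by dimension shifting, using that $\Hom_R(R/I,N)=0$ forces $\Hom_R(R/I,E(N))=0$ for an essential extension, with the nestedness $X_0\supseteq\cdots\supseteq X_{n-1}$ supplying exactly the lower-degree vanishing needed to kill the injective terms --- is correct, and your closing remark that this is why the classes are closed under injective envelopes is the right conceptual takeaway. Two caveats, however. First, your logical order is the reverse of \cite{HS}: as the introduction of the present paper notes, \cite{HS} establishes the dual (cotilting) description \emph{first} and only then transfers it to the Tor-description of tilting classes (Theorem~\ref{T:HS}) via Koszul homology. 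Deriving Theorem~\ref{T:zero} from Theorem~\ref{T:HS} is not formally circular if you take the latter as a black box, but as a proof of a theorem \emph{of} \cite{HS} it largely re-cites its source in the opposite direction. Second, the step you label ``bookkeeping'' --- replacing $\tor{}_i^R(R/I,M^+)=0$ by $\ext_R^i(R/I,M)=0$ --- hides a genuine obstruction rather than mere quantifier-matching: the isomorphism $\tor{}_i^R(N,M^+)\cong\ext_R^i(N,M)^+$ requires $N$ to be $\FP_{i+1}$, and over a non-coherent ring $R/I$ need not even be $\FP_2$ for $I$ finitely generated. The resolution in \cite{HS} is to re-express both the Tor- and Ext-conditions through Koszul (co)homology of a generating sequence of $I$, where the complexes consist of finitely generated free modules and character duality applies degreewise; that translation is the technical core of \cite{HS}, not a side condition. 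So: correct skeleton and a correctly executed reformulation step, but the proposal should not be mistaken for an independent proof.
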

	The terminology is explained in the next section, but for now the main point is that these classes consist of all modules $M$ such that their $i$-th minimal cosyzygy $\Omega^{-i}M$ is torsion-free with respect to a family of (finitely generated) ideals. Afterwards, this can be further translated to a statement about Koszul homology, which dualizes well, yielding a classification of tilting classes as stated in Theorem~\ref{T:HS}. However, one might wonder whether we can play a similar game without using dualization. For that, a closure property analogous to the closure of the dual classes under injective envelopes would be required, and considering further that the tilting classes are expressed via the Tor functors (Theorem~\ref{T:HS}), this naturally leads to the following question:
	\begin{quest}\label{Q00}
		Are the tilting classes over a commutative ring closed under flat covers?
	\end{quest}
	An affirmative answer to Question~\ref{Q00} would yield an alternative description of the tilting classes analogous to Theorem~\ref{T:zero} --- instead of torsion-freeness of the minimal cosyzygies, the modules in the tilting class would be described by divisibility of their ``yokes'', that is, the cokernels in their minimal flat resolution. 
	
	In what follows, we show that the answer to Question~\ref{Q00} is a rather solid \textbf{NO}. In particular, we show that the tilting classes over an integral domain are closed under flat covers if and only if the domain is almost perfect (and thus, in particular, at most one-dimensional) --- Theorem~\ref{T:main}. By \cite{HS}, 1-tilting classes are precisely the classes of divisibility by finitely generated ideals with trivial annihilator. With respect to this, we cover a substantial family of tilting classes by proving that if a class of all modules divisible by a single regular element $s$ of some commutative noetherian ring $R$ is closed under flat covers, than the ring quotient $R/sR$ cannot contain a regular element --- Theroem~\ref{T:main2}, and also it cannot be a non-field integral domain under some size conditions.
\section{Preliminaries}
If not stated otherwise, our rings will be unital and commutative, and $\ModR$ will denote the category of all (right) $R$-modules. Let us gather some terminology and facts we will use freely in this paper:
\subsection{Filtrations} By a \emph{filtration} $M=\bigcup_{\alpha < \lambda}M_\alpha$ of an $R$-module $M$ we mean an increasing sequence 
$$0 = M_0 \subseteq M_1 \subseteq \cdots M_\alpha \subseteq M_{\alpha + 1} \subseteq \cdots$$
of submodules of $M$ indexed by an ordinal $\lambda$, such that its union is equal to the whole module $M$, and such that its \emph{continuous} --- that is, for any limit ordinal $\beta < \lambda$ we have $M_\beta = \bigcup_{\alpha < \beta} M_\alpha$. If $\mathcal{C}$ is a class of modules, a filtration $M=\bigcup_{\alpha<\lambda}M_\alpha$ is a $\mathcal{C}$\emph{-filtration} if $M_{\alpha+1}/M_\alpha$ is isomorphic to some module from $\mathcal{C}$ for all $\alpha<\lambda$. We say that $M$ is $\mathcal{C}$\emph{-filtered} (or \emph{filtered by modules from $\mathcal{C}$}), if $M$ admits a $\mathcal{C}$-filtration.
\subsection{Left and right approximations} Let $\mathcal{C}$ be a class of modules, and $M \in \ModR$. We say that a map $f: C \rightarrow M$ is a $\mathcal{C}$\emph{-precover}, if $C \in \mathcal{C}$, and for any $C' \in \mathcal{C}$ and any map $g: C' \rightarrow M$ there is a factorization map $h: C' \rightarrow C$ making the following triangle commute:
$$
\begin{tikzcd}
	C \arrow{r}{f} & M \\
	C'\arrow{ur}{g} \arrow[dashed]{u}{h}
\end{tikzcd}
$$
A $\mathcal{C}$-precover $f$ is a $\mathcal{C}$\emph{-cover} provided that any map $h \in \End_R(C)$, which makes the above diagram commute in the case when $f=g$, is necessarily an automorphism. We call a class $\mathcal{C}$ \emph{(pre)-covering} if any module $M$ admits a $\mathcal{C}$-(pre)cover. The notions of a $\mathcal{C}$\emph{-preenvelope} and $\mathcal{C}$\emph{-envelope} are defined dually, as well as the notion of a (pre)enveloping class. We recall that if a $\mathcal{C}$-cover ($\mathcal{C}$-envelope) exists, its domain (codomain) is uniquely determined up to an isomorphism (for details, see \cite[\S 5]{GT}).
\subsection{Cotorsion pairs} If $\mathcal{C}$ is a class of modules, we fix the following notation:
$$\mathcal{C}^\perp = \{M \in \ModR \mid \ext{}_R^1(C,M)=0 ~\forall C \in \mathcal{C}\},$$
$${}^\perp\mathcal{C} = \{M \in \ModR \mid \ext{}_R^1(M,C)=0 ~\forall C \in \mathcal{C}\},$$
with the obvious shorthand $M^\perp = \{M\}^\perp$ for a module $M \in \ModR$. A pair $(\mathcal{A},\mathcal{B})$ of subclasses of $\ModR$ is a \emph{cotorsion pair} provided that $\mathcal{B}=\mathcal{A}^\perp$ and $\mathcal{A}={}^\perp\mathcal{B}$. 

Given a class $\mathcal{C} \subseteq \ModR$, we say that an epimorphism $f: C \rightarrow M$ is a \emph{special $\mathcal{C}$-precover} if $\Ker(f) \in \mathcal{C}^\perp$. Dually, a monomorphism $f: M \rightarrow C$ is a \emph{special $\mathcal{C}$-preenvelope} if $\Coker(f) \in {}^\perp\mathcal{C}$. It is straightforward to check that any special $\mathcal{C}$-precover (special $\mathcal{C}$-preenvelope) is a $\mathcal{C}$-precover ($\mathcal{C}$-preenvelope). On the other hand, the Wakamatsu Lemma (\cite[Lemma 5.13]{GT}) shows that any epic $\mathcal{C}$-cover, and any monic $\mathcal{C}$-envelope, is special. Class $\mathcal{C}$ is called $\emph{special precovering}$ if any module from $\ModR$ admits a special $\mathcal{C}$-precover, and of course there is also the dual notion of a \emph{special preenveloping} class.

A cotorsion pair $(\mathcal{A},\mathcal{B})$ is \emph{complete} if $\mathcal{A}$ is special precovering (or equivalently, that $\mathcal{B}$ is special preenveloping, as proved by Salce \cite[Lemma 5.20]{GT}). The key result we will use is that any cotorsion pair generated by a set is complete, and admits a rather explicit description in terms of the generating set. A cotorsion pair $(\mathcal{A},\mathcal{B})$ is \emph{generated} by a set $\mathcal{S} \subseteq \ModR$ if $\mathcal{B}=\mathcal{S}^\perp$.

\begin{thm}\emph{(\cite[Theorem 6.11 and Corollary 6.14]{GT})}\label{T:abundant}
	Let $(\mathcal{A},\mathcal{B})$ be a cotorsion pair generated by a set $\mathcal{S} \subseteq \ModR$. Then:
\begin{itemize}
	\item the cotorsion pair $(\mathcal{A},\mathcal{B})$ is complete,
	\item the class $\mathcal{A}$ consists precisely of all direct summands of $\mathcal{S} \cup \{R\}$-filtered modules.
\end{itemize}
\end{thm}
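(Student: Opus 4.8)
The plan is to reprove the Eklof--Trlifaj theorem along its classical lines. The crucial ingredient is \emph{Eklof's Lemma}: for an arbitrary class $\mathcal{C}$, the class ${}^\perp\mathcal{C}$ is closed under transfinite extensions; that is, if a module $N$ admits a filtration $N=\bigcup_{\alpha<\lambda}N_\alpha$ with $N_{\alpha+1}/N_\alpha\in{}^\perp\mathcal{C}$ for every $\alpha$, then $N\in{}^\perp\mathcal{C}$. I would prove this by fixing $C\in\mathcal{C}$ and a short exact sequence $0\to C\to E\to N\to 0$, pulling it back along each inclusion $N_\alpha\hookrightarrow N$ to get $0\to C\to E_\alpha\to N_\alpha\to 0$, and building a coherent system of splittings $\rho_\alpha\colon N_\alpha\to E_\alpha$ by transfinite recursion: at a successor step the obstruction to extending $\rho_\alpha$ to $\rho_{\alpha+1}$ lives in $\ext{}_R^1(N_{\alpha+1}/N_\alpha,C)=0$, and at a limit step one takes the union of the splittings already constructed, which is well defined by continuity of the filtration. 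Granting Eklof's Lemma, the inclusion ``$\mathcal{A}\supseteq$ summands of filtered modules'' is immediate: $R$ is projective and each $S\in\mathcal{S}$ lies in ${}^\perp(\mathcal{S}^\perp)=\mathcal{A}$ by definition, so every $\mathcal{S}\cup\{R\}$-filtered module lies in $\mathcal{A}$, and $\mathcal{A}$, having the shape ${}^\perp(-)$, is closed under direct summands by additivity of $\ext{}_R^1(-,-)$.

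For completeness, I would build a special $\mathcal{A}$-precover of an arbitrary module $M$ by the small object argument. Fix a regular cardinal $\kappa$ with $\kappa>|R|$ and $\kappa>|S|$ for all $S\in\mathcal{S}$ (there is only a set of them). Choose an epimorphism $P_0\twoheadrightarrow M$ with $P_0$ free and put $K_0=\Ker(P_0\to M)$. Inductively construct a continuous chain $P_0\subseteq P_1\subseteq\cdots\subseteq P_\alpha\subseteq\cdots$ $(\alpha<\kappa)$, each term still surjecting onto $M$: at a successor step, for every $S\in\mathcal{S}$ pick a generating set of the abelian group $\ext{}_R^1(S,K_\alpha)$, realise its elements by extensions of $K_\alpha$, form their \emph{universal} extension $0\to K_\alpha\to U\to\bigoplus(\text{copies of }S)\to 0$, and let $P_{\alpha+1}$ be the pushout of $P_\alpha\leftarrow K_\alpha\to U$; at limits take unions. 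Then $P_{\alpha+1}/P_\alpha$ is a direct sum of modules from $\mathcal{S}$ and $K_{\alpha+1}:=\Ker(P_{\alpha+1}\to M)=U$, so the standard connecting-map argument for the universal extension shows $\ext{}_R^1(S,K_\alpha)\to\ext{}_R^1(S,K_{\alpha+1})$ is zero for every $S$. Put $A=P_\kappa$ and $B=K_\kappa$. As $P_0$ is free it is $\{R\}$-filtered, hence $A$ is $\mathcal{S}\cup\{R\}$-filtered and $A\in\mathcal{A}$ by Eklof's Lemma. For $B$: given $S\in\mathcal{S}$ and $\xi\in\ext{}_R^1(S,B)$, represent $\xi$ via a free presentation of $S$ of rank $|S|$; the corresponding syzygy has cardinality $<\kappa$, so the image of the representing map lands in some $K_\alpha$ by regularity of $\kappa$ and continuity of the chain, whence $\xi$ is induced from $\ext{}_R^1(S,K_\alpha)$ and thus $\xi=0$. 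Therefore $B\in\mathcal{S}^\perp=\mathcal{B}=\mathcal{A}^\perp$, so $0\to B\to A\to M\to 0$ is a special $\mathcal{A}$-precover and the cotorsion pair is complete.

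It remains to prove the inclusion $\mathcal{A}\subseteq$ summands of $\mathcal{S}\cup\{R\}$-filtered modules. Given $A\in\mathcal{A}$, apply the construction above to $M:=A$, obtaining $0\to B\to A'\to A\to 0$ with $A'$ being $\mathcal{S}\cup\{R\}$-filtered and $B\in\mathcal{B}=\mathcal{A}^\perp$. Since $A\in\mathcal{A}={}^\perp\mathcal{B}$ we have $\ext{}_R^1(A,B)=0$, so this sequence splits and $A'\cong A\oplus B$; hence $A$ is a direct summand of the $\mathcal{S}\cup\{R\}$-filtered module $A'$, as required.

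I expect the main obstacle to be the bookkeeping in the transfinite construction of the middle paragraph --- in particular, verifying that $\kappa$ is chosen large enough that every obstruction in $\ext{}_R^1(S,B)$ genuinely originates at some bounded stage $\alpha<\kappa$ (and has therefore already been annihilated), which rests on the cardinality bounds on $S$ relative to $\kappa$ and on the $\kappa$-continuity of the chain $(K_\alpha)_{\alpha<\kappa}$. The two splitting arguments and Eklof's Lemma are then routine transfinite inductions.
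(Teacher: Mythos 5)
Your proposal is correct: it is precisely the standard Eklof--Trlifaj argument (Eklof's Lemma plus the iterated universal-extension/small-object construction and the splitting trick for the reverse inclusion), which is the proof given in the reference \cite[Theorem 6.11 and Corollary 6.14]{GT} that the paper cites; the paper itself supplies no proof of this statement. All the key points --- the choice of a regular $\kappa$ exceeding the presentation sizes of $R$ and the modules in $\mathcal{S}$, the vanishing of $\ext{}_R^1(S,K_\alpha)\to\ext{}_R^1(S,K_{\alpha+1})$, and the continuity/regularity argument locating each obstruction at a bounded stage --- are handled as in the cited source.
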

\subsection{Tilting theory of commutative rings}
An $R$-module $T$ is \emph{n-tilting} for some $n \geq 0$ provided that it satisfies the following conditions:
\begin{enumerate}
	\item[(T1)] $\pd T \leq n$,
	\item[(T2)] $\ext_R^i(T,T^{(\kappa)})=0$ for all $i>0$ and all cardinals $\kappa$,
	\item[(T3)] there is an exact sequence
$$0 \rightarrow R \rightarrow T_0 \rightarrow T_1 \rightarrow \cdots \rightarrow T_n \rightarrow 0,$$
where $T_i \in \Add(T)$, the additive closure of $T$, for all $0 \leq i \leq n$.
\end{enumerate}
	The cotorsion pair $(\mathcal{A},\mathcal{B})$ generated by $T$ is called the \emph{n-tilting cotorsion pair}, and the class $\mathcal{B}$ the \emph{n-tilting class}. We suppress the index ($n$-) in the notation if we do not desire to specify the dimension bound on $T$. Two tilting modules are \emph{equivalent} if they induce the same tilting class. Even though the tilting modules can be infinitely generated, the tilting classes can always be described by ``finite data''. A module $S$ is called \emph{strongly finitely presented} if it admits a projective resolution consisting of finitely generated projective modules.

\begin{thm}\emph{(\cite[Theorem 13.46]{GT})}\label{T:finitetype}
		A class $\mathcal{B}$ is $n$-tilting if and only if there is a set $\mathcal{S}$ of strongly finitely presented modules of projective dimension at most $n$ such that $\mathcal{B}=\mathcal{S}^{\perp_\infty}=\{M \in \ModR \mid \ext_R^i(S,M)=0 ~\forall S \in \mathcal{S},~\forall i>0\}$.
\end{thm}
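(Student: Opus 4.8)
The plan is to prove the two implications of the equivalence separately; the implication ``$n$-tilting class $\Rightarrow$ finite type'' carries essentially all the difficulty. \textbf{From finite type to tilting.} Suppose $\mathcal{B}=\mathcal{S}^{\perp_\infty}$ for a set $\mathcal{S}$ of strongly finitely presented modules of projective dimension $\leq n$ (the case $n=0$, where $\mathcal{B}=\ModR$, is trivial, so assume $n\geq 1$). First I would replace $\mathcal{S}$ by the set of all syzygies of its members --- still strongly finitely presented, still of projective dimension $\leq n$, and now closed under syzygies --- so that $\mathcal{B}=\mathcal{S}^{\perp_1}$ and the cotorsion pair $(\mathcal{A},\mathcal{B}):=({}^{\perp}\mathcal{B},\mathcal{B})$ is generated by the set $\mathcal{S}$. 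By Theorem~\ref{T:abundant} this pair is complete and $\mathcal{A}$ is the class of direct summands of $\mathcal{S}\cup\{R\}$-filtered modules; it is moreover hereditary, so $\mathcal{B}=\mathcal{A}^{\perp_\infty}$, and since projective dimension $\leq n$ passes to filtrations and summands, $\mathcal{A}$ is contained in the class $\mathcal{P}_n$ of modules of projective dimension $\leq n$. The extra input I would use here is that strong finite presentation makes each $\ext_R^i(S,-)$ commute with direct sums, so $\mathcal{B}$ is closed under arbitrary direct sums.

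Next I would build a tilting module by iterating special $\mathcal{B}$-preenvelopes from $R$: take $0\to R\to B_0\to A_0\to 0$ with $B_0\in\mathcal{B}$, $A_0\in\mathcal{A}$, and recursively $0\to A_{k-1}\to B_k\to A_k\to 0$ for $1\leq k\leq n-1$. Each $B_k$ is an extension of two members of $\mathcal{A}$, hence lies in $\mathcal{A}\cap\mathcal{B}$, while a dimension shift along $\mathcal{A}\subseteq\mathcal{P}_n$ gives $A_{n-1}\in\mathcal{A}^{\perp_\infty}=\mathcal{B}$. Splicing yields $0\to R\to B_0\to\cdots\to B_{n-1}\to A_{n-1}\to 0$, and $T:=A_{n-1}\oplus\bigoplus_{k=0}^{n-1}B_k$ then satisfies (T1) (each summand lies in $\mathcal{P}_n$), (T3) (the spliced sequence), and (T2) (as $T\in\mathcal{A}$ and $T^{(\kappa)}\in\mathcal{B}=\mathcal{A}^{\perp_\infty}$, using closure of $\mathcal{B}$ under direct sums). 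It remains to identify its tilting class $T^{\perp_\infty}$ with $\mathcal{B}$: the inclusion $\mathcal{B}\subseteq T^{\perp_\infty}$ holds since $T\in\mathcal{A}$, and for the reverse one takes $M\in T^{\perp_\infty}$ and builds a length-$n$ resolution $0\to K\to U_{n-1}\to\cdots\to U_0\to M\to 0$ with each $U_i\in\Add(T)$ and with $K$ and all intermediate syzygies again in $T^{\perp_\infty}$, iterating the evaluation maps $T^{(\Hom_R(T,N))}\to N$, which are surjective for $N\in T^{\perp_\infty}$ by virtue of the sequence (T3). Since each $U_i\in\Add(T)\subseteq\mathcal{B}$ while $S\in\mathcal{A}$ for $S\in\mathcal{S}$, we have $\ext_R^{\geq 1}(S,U_i)=0$, and as $\pd S\leq n$ a dimension shift along this resolution yields $\ext_R^{\geq 1}(S,M)=0$; by Eklof's lemma, $\ext_R^1(A,M)=0$ for all $A\in\mathcal{A}$, i.e. $M\in\mathcal{B}$.

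\textbf{From tilting to finite type.} Let $\mathcal{B}=T^{\perp_\infty}$ with $T$ an $n$-tilting module and $\mathcal{A}={}^{\perp}\mathcal{B}$. Since $\mathcal{B}=\{\Omega^jT\mid 0\leq j<n\}^{\perp_1}$, the pair $(\mathcal{A},\mathcal{B})$ is generated by a set, so by Theorem~\ref{T:abundant} it is complete and the filtration description of $\mathcal{A}$ again gives $\mathcal{A}\subseteq\mathcal{P}_n$. Let $\mathcal{S}$ be a representative set of the strongly finitely presented modules lying in $\mathcal{A}$; these have projective dimension $\leq n$ automatically, and (including the finitely generated projectives) $\mathcal{S}$ is closed under syzygies, so the cotorsion pair $(\mathcal{A}',\mathcal{B}')$ it generates has $\mathcal{B}'=\mathcal{S}^{\perp}=\mathcal{S}^{\perp_\infty}$. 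As $\mathcal{S}\subseteq\mathcal{A}$ and $\mathcal{A}$ is closed under transfinite extensions and summands, $\mathcal{A}'\subseteq\mathcal{A}$; hence it suffices to prove the reverse inclusion $\mathcal{A}\subseteq\mathcal{A}'$, for then $\mathcal{A}=\mathcal{A}'$ and $\mathcal{B}=\mathcal{A}^{\perp}=(\mathcal{A}')^{\perp}=\mathcal{B}'=\mathcal{S}^{\perp_\infty}$, as wanted.

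The step $\mathcal{A}\subseteq\mathcal{A}'$ --- that every module in $\mathcal{A}$ is a direct summand of a module filtered by members of $\mathcal{S}\cup\{R\}$ --- is the main obstacle, and it absorbs essentially all the depth of the theorem. I would deduce it from two ingredients. First, every $n$-tilting class is closed under direct limits: for $n=1$ this is easy, since $T^{\perp_\infty}=T^{\perp_1}=\Gen(T)$ and $\Gen(T)$ is always closed under direct limits, but for $n>1$ it is a substantial theorem (see \cite{GT}), proved by a delicate transfinite argument resting on Hill's lemma, which I would quote rather than reprove. Second, granting that $\mathcal{B}$ is closed under direct limits, one uses Hill's lemma to refine the deconstruction of the deconstructible class $\mathcal{A}\subseteq\mathcal{P}_n$ into a filtration whose factors are strongly finitely presented modules from $\mathcal{A}$: heuristically, a module of projective dimension $\leq n$ is a direct limit of strongly finitely presented modules of projective dimension $\leq n$, and the closure of $\mathcal{B}$ under direct limits is precisely what lets one retain enough of these approximants inside $\mathcal{A}$ to serve as filtration factors. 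This refinement is the technical heart; once it is established, the theorem follows as above.
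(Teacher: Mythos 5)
This statement is imported verbatim from \cite[Theorem 13.46]{GT}; the paper itself gives no proof of it, so there is no in-text argument to measure yours against, and I can only judge your proposal on its own merits. Your first implication (finite type implies tilting) is complete and correct, and follows the standard route: closing $\mathcal{S}$ under syzygies so that $\mathcal{B}=\mathcal{S}^{\perp_1}$ and the induced cotorsion pair is hereditary and complete by Theorem~\ref{T:abundant}, iterating special $\mathcal{B}$-preenvelopes of $R$ to manufacture $T$, and recovering $\mathcal{B}=T^{\perp_\infty}$ by dimension shifting along an $\Add(T)$-resolution. The individual steps you invoke (closure of $\mathcal{B}$ under direct sums via strong finite presentation, $\mathcal{A}\subseteq\mathcal{P}_n$ via the filtration description, surjectivity of the evaluation maps from (T3)) all check out.

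The converse, however, is only an outline with a genuine gap exactly where the theorem is deep. Your reduction to the claim that $\mathcal{A}$ coincides with the class generated by its strongly finitely presented members is the right one, and quoting the closure of $n$-tilting classes under direct limits is legitimate (it is proved in \cite{GT} independently of finite type). But your second ingredient --- that Hill's lemma refines the deconstruction of $\mathcal{A}$ into a filtration with strongly finitely presented factors --- is essentially a restatement of the conclusion rather than a proof of it. What Hill's lemma together with the set-theoretic machinery actually delivers is \emph{countable} type: the tilting cotorsion pair is generated by countably presented modules from $\mathcal{A}$. The descent from countably presented to finitely presented generators is a separate and substantially harder step (the Bazzoni--Herbera argument for $n=1$ and its extension by Bazzoni--\v{S}\v{t}ov\'{\i}\v{c}ek to all $n$), and it runs through relative Mittag-Leffler conditions on countable direct systems, not through a refinement of filtrations. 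Moreover, the heuristic you lean on --- that a module of projective dimension $\leq n$ is a direct limit of strongly finitely presented modules of projective dimension $\leq n$ --- is not a theorem one may invoke over a general ring, and even where it holds it produces a direct limit rather than the filtration you need; direct limits of $\mathcal{S}$-filtered modules need not be $\mathcal{S}$-filtered. As written, the hard half of the theorem is deferred rather than proved.
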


Over commutative rings, the tilting classes have been classified in terms of the spectrum of the ring. We say that a subset $X$ of the Zariski spectrum $\Spec(R)$ of the ring $R$ is \emph{Thomason}, provided that there is a set $\mathcal{I}$ of finitely generated ideals of $R$ such that $X=\bigcup_{I \in \mathcal{I}}V(I)$, where $V(I)=\{\mathfrak{p} \in \Spec(R) \mid I \subseteq \mathfrak{p}\}$ is the basic Zariski-closed set defined on $I$. A sequence $(X_0,X_1,\ldots,X_{n-1})$ of Thomason subsets of $\Spec(R)$ is called \emph{characteristic} if the following conditions are satisfied:
\begin{enumerate}
	\item[(i)] $X_0 \supseteq X_1 \supseteq \cdots \supseteq X_{n-1}$,
	\item[(ii)] $\ext_R^i(R/I,R)=0$ for any $i<n$ and any finitely generated ideal $I$ such that $V(I) \subseteq X_i$.
\end{enumerate}

\begin{thm}\emph{(\cite[Theorem 6.4]{HS})}\label{T:HS}
	Let $R$ be a commutative ring and $n>0$. There is a 1-1 correspondence
	$$\left \{ \begin{tabular}{ccc} \text{ characteristic sequences} \\ \text{ $(X_0,X_1,\ldots,X_{n-1})$ in $\Spec(R)$} \end{tabular}\right \}  \leftrightarrow  \left \{ \begin{tabular}{ccc} \text{ $n$-tilting classes $\mathcal{T}$ } \\ \text{ in $\ModR$} \end{tabular}\right \},$$
given by the following assignment:
$$(X_0,X_1,\ldots,X_{n-1}) \rightarrow \{M \in \ModR \mid \tor{}_i^R(R/I,M)=0 ~\forall i<n, ~\forall V(I) \subseteq X_i\}.$$
\end{thm}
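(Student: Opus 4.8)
The plan is to obtain the classification by transporting the classification of the \emph{dual} classes, Theorem~\ref{T:zero}, across elementary (character) duality. Write $M^+ := \Hom_{\mathbb Z}(M,\mathbb Q/\mathbb Z)$ for the character module, and for a class $\mathcal D \subseteq \ModR$ put $\mathcal D^+ := \{M \in \ModR \mid M^+ \in \mathcal D\}$. I will use the standard duality formalism of \cite[\S5]{HS}: an $n$-tilting class $\mathcal T$ is of finite type (Theorem~\ref{T:finitetype}), hence definable, hence so is its character-dual $\mathcal T^+$, which is exactly a ``class dual to an $n$-tilting class'' in the sense of Theorem~\ref{T:zero}; moreover $(\mathcal T^+)^+ = \mathcal T$ and $(\mathcal D^+)^+ = \mathcal D$ for any such dual class $\mathcal D$, and the parametrization of the dual classes by characteristic sequences in Theorem~\ref{T:zero} is in fact a bijection. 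Granting this formalism, the theorem follows from a single computation.

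That computation is: for every characteristic sequence $(X_0,\dots,X_{n-1})$, the class $\mathcal B_X$ assigned to it in Theorem~\ref{T:HS} is the character-dual $\mathcal D_X^+$ of the class
\[
\mathcal D_X := \{N \in \ModR \mid \Hom{}_R(R/I,\Omega^{-i}N)=0 ~\forall i<n, ~\forall V(I)\subseteq X_i\}
\]
associated with $(X_0,\dots,X_{n-1})$ in Theorem~\ref{T:zero}. To prove $\mathcal D_X^+=\mathcal B_X$, first observe that $\mathcal D_X$ is closed under injective envelopes (by \cite[Proposition 5.5]{HS}; or directly, since $\Omega^{-i}E=0$ for $i\ge 1$ when $E$ is injective, and $\Hom{}_R(R/I,N)=0$ forces $\Hom{}_R(R/I,E(N))=0$ by essentiality of $N\subseteq E(N)$). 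A standard induction along the minimal injective coresolution of a module $N\in\mathcal D_X$, using the nesting $X_0\supseteq\cdots\supseteq X_{n-1}$, then shows that $N\in\mathcal D_X$ is equivalent to $\ext{}_R^i(R/I,N)=0$ for all $i<n$ and all finitely generated $I$ with $V(I)\subseteq X_i$. Over a noetherian ring $R/I$ is strongly finitely presented, so the natural isomorphism $\ext{}_R^i(S,M^+)\cong\tor{}_i^R(S,M)^+$ gives, for $M^+\in\mathcal D_X$, that $\tor{}_i^R(R/I,M)^+=0$, i.e.\ $M\in\mathcal B_X$, and conversely — hence $\mathcal D_X^+=\mathcal B_X$. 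Over a general commutative ring one repairs this by replacing $\ext{}_R^i(R/I,-)$-vanishing and $\tor{}_i^R(R/I,-)$-vanishing throughout by vanishing of the Koszul cohomology $H^i(\mathbf a;-)$ and Koszul homology $H_i(\mathbf a;-)$ over finite generating sequences $\mathbf a$: the former pass is the usual depth-sensitivity of the Koszul complex (valid over any commutative ring, since the family $\{I \text{ f.g.} : V(I)\subseteq X_i\}$ is closed under radicals and finite sums), and the degreewise identity $\Hom{}_R(F,M^+)\cong(F\otimes_R M)^+$ for $F$ finitely generated free, together with exactness of $(-)^+$, yields $H^i(\mathbf a;M^+)\cong H_i(\mathbf a;M)^+$, so $M^+\in\mathcal D_X$ is equivalent to the vanishing of all $H_i(\mathbf a;M)$, i.e.\ to $M\in\mathcal B_X$.

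With this computation the correspondence of Theorem~\ref{T:HS} is the composite ``form $\mathcal D_X$ as in Theorem~\ref{T:zero}, then apply $(-)^+$''. It is well-defined, i.e.\ lands among $n$-tilting classes: by Theorem~\ref{T:zero}, $\mathcal D_X$ is dual to an $n$-tilting class $\mathcal T$, so $\mathcal B_X=\mathcal D_X^+=(\mathcal T^+)^+=\mathcal T$. It is surjective: for an $n$-tilting class $\mathcal T$, its dual $\mathcal T^+$ is a class dual to an $n$-tilting class, hence $\mathcal T^+=\mathcal D_X$ for some characteristic sequence by Theorem~\ref{T:zero}, and then $\mathcal B_X=\mathcal D_X^+=\mathcal T$. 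And it is injective: if $\mathcal B_X=\mathcal B_Y$ then $\mathcal D_X^+=\mathcal D_Y^+$, hence $\mathcal D_X=(\mathcal D_X^+)^+=(\mathcal D_Y^+)^+=\mathcal D_Y$, so $(X_i)=(Y_i)$ by the injectivity of the parametrization in Theorem~\ref{T:zero}. (Unwound, $X_i$ is recovered from $\mathcal T$ as the union of all $V(I)$ over finitely generated $I$ with $\ext{}_R^j(R/I,R)=0$ for $j\le i$ and $\tor{}_i^R(R/I,-)$ vanishing on $\mathcal T$; the non-trivial inclusion is checked by localizing at a prime and testing against modules built from residue fields, as in the noetherian case \cite{APST}.)

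The step I expect to be the main obstacle is the first half of the computation above — the passage from the Hom-and-cosyzygy description of $\mathcal D_X$ to the $\ext$- respectively Koszul-cohomology description. It is here that closure under injective envelopes is indispensable (it makes the induction along cosyzygies possible), that the nesting of the Thomason sets is used in an essential way (to collapse ``$\ext^j$ for all $j\le i$'' to ``$\ext^i$'' levelwise), and that the non-noetherian case forces the Koszul detour, with the attendant need to verify that the $\ext(R/I,-)$- and Koszul-cohomology conditions really do cut out the same class. Everything else — the duality identities for $(-)^+$, Koszul homology of the f.g.\ free Koszul complex, and the elementary-duality bookkeeping of \cite[\S5]{HS} — is standard.
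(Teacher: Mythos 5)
This statement is imported verbatim from \cite[Theorem 6.4]{HS}; the paper under review gives no proof of it, so there is nothing internal to compare against. Your outline reproduces exactly the strategy that the introduction attributes to \cite{HS} --- first classify the dual classes by torsion-freeness of minimal cosyzygies (Theorem~\ref{T:zero}), translate that condition into vanishing of Koszul cohomology, and then pass back to tilting classes via character duality $H^i(\mathbf a;M^+)\cong H_i(\mathbf a;M)^+$ --- and it correctly isolates the genuinely hard step (the equivalence of the Hom-and-cosyzygy description with the Koszul-cohomology one, which in the non-noetherian case cannot be run with $\ext^i_R(R/I,-)$), so it is the same approach, with the substantive work appropriately deferred to the cited machinery.
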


\begin{remark}
	In this paper, we will mostly deal with 1-tilting classes, for which Theorem~\ref{T:HS} gives a very explicit description --- they are precisely the classes $\mathcal{D}_\mathcal{I}=\{M \in \ModR \mid M=IM ~\forall I \in \mathcal{I}\}$ of all modules divisible by a set $\mathcal{I}$ of finitely generated ideals $I$ satisfying $\Hom_R(R/I,R) = 0$.

		Also, it follows from \cite{HS} that we do not need to consider all ideals $I$ with $V(I) \subseteq X_i$ in Theroem~\ref{T:HS} --- it is enough to test a set $\mathcal{I}_i$ of finitely generated ideals such that $X_i=\bigcup_{I \in \mathcal{I}_i}V(I)$.
\end{remark}

\section{Divisibility classes and flat covers}
	Let $\mathcal{F}$ be a covering class. Say that a class of modules $\mathcal{C}$ is \emph{closed under $\mathcal{F}$-covers} if for any $\mathcal{F}$-cover $h: F \rightarrow M$ with $M \in \mathcal{C}$ we have $F \in \mathcal{C}$. The following lemma is simple, but key to our investigation, as it allows to study properties of divisible preenvelopes, rather than flat covers, which are often hard to compute.
	\begin{lem}
			\label{PL00}
			Let $\mathcal{C}$ be a preenveloping class closed under direct summands and $\mathcal{F}$ a covering class. Then $\mathcal{C}$ is closed under $\mathcal{F}$-covers if and only if any module $F \in \mathcal{F}$ admits a $\mathcal{C}$-preenvelope $f: F \rightarrow L$ such that $L \in \mathcal{F}$.
	\end{lem}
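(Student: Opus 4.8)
The plan is to prove both implications by chasing the defining universal properties of covers and preenvelopes. Only two structural facts will be used: that the codomain of a $\mathcal{C}$-preenvelope lies in $\mathcal{C}$, and that $\mathcal{C}$ is closed under direct summands; in particular no special feature of $\mathcal{F}$ (such as surjectivity of flat covers) will enter the argument.

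For the implication from right to left, I would suppose every $F \in \mathcal{F}$ admits a $\mathcal{C}$-preenvelope landing in $\mathcal{F}$, and take an $\mathcal{F}$-cover $h: F \rightarrow M$ with $M \in \mathcal{C}$. Choosing a $\mathcal{C}$-preenvelope $f: F \rightarrow L$ with $L \in \mathcal{F}$ (so also $L \in \mathcal{C}$), the preenvelope property factors $h$ through $f$, say $h = g f$ with $g: L \rightarrow M$; since $L \in \mathcal{F}$, the precover property of $h$ factors $g$ through $h$, say $g = h k$ with $k: L \rightarrow F$. Then $h = h(kf)$ with $kf \in \End_R(F)$, so the cover property forces $kf$ to be an automorphism. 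Hence $f$ is a split monomorphism, $F$ is isomorphic to a direct summand of $L \in \mathcal{C}$, and closure under summands yields $F \in \mathcal{C}$.

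For the converse, I would assume $\mathcal{C}$ is closed under $\mathcal{F}$-covers and fix $F \in \mathcal{F}$. Take any $\mathcal{C}$-preenvelope $f': F \rightarrow L'$ (available since $\mathcal{C}$ is preenveloping) and an $\mathcal{F}$-cover $h: L'' \rightarrow L'$ (available since $\mathcal{F}$ is covering). Since $L' \in \mathcal{C}$ and $\mathcal{C}$ is closed under $\mathcal{F}$-covers we get $L'' \in \mathcal{C}$, and since $F \in \mathcal{F}$ the precover property factors $f' = h g$ for some $g: F \rightarrow L''$. It then remains to check that $g$ is a $\mathcal{C}$-preenvelope: given $C \in \mathcal{C}$ and $u: F \rightarrow C$, factor $u = v f'$ through the preenvelope $f'$, so that $u = v h g = (vh) g$ factors through $g$. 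Thus $g: F \rightarrow L''$ is the desired $\mathcal{C}$-preenvelope with $L'' \in \mathcal{F}$.

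The whole argument is a diagram chase, so I expect no real obstacle; the only points demanding a little attention are the (implicit) fact that the codomain of a $\mathcal{C}$-preenvelope is automatically in $\mathcal{C}$ — needed to close the first implication via closure under summands — and the final verification that the composite $g$ retains the full preenvelope property rather than merely factoring maps up to $h$.
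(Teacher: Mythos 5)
Your proof is correct and follows essentially the same two diagram chases as the paper: the split-monomorphism argument via the cover property for one direction, and the factorization of a $\mathcal{C}$-preenvelope through the $\mathcal{F}$-cover of its codomain for the other. No issues.
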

	\begin{proof}
			($\Rightarrow$) Let $F \in \mathcal{F}$ and $f: F \rightarrow C$ a $\mathcal{C}$-preenvelope. By the assumption, the domain of the $\mathcal{F}$-cover $h: F' \rightarrow C$ of $C$ belongs to $\mathcal{C}$. Then there is a map $g: F \rightarrow F'$ such that $f=hg$. We claim that $g$ is a $\mathcal{C}$-preenvelope. Indeed, if $l: F \rightarrow C'$ is map with $C' \in \mathcal{C}$, then there is factorization map $g': C \rightarrow C'$ such that $g'f=l$. Then $(g'h)g=g'f=l$, and thus $g$ is a $\mathcal{C}$-preenvelope with codomain in $\mathcal{F}$, as desired.

			($\Leftarrow$) Let $C \in \mathcal{C}$ and consider its $\mathcal{F}$-cover $h: F \rightarrow C$. Using the assumption, there is a $\mathcal{C}$-preenvelope of $F$, say $f: F \rightarrow L$, such that $L \in \mathcal{F}$. Since $f$ is a $\mathcal{C}$-preenvelope, there is a factorization $g: L \rightarrow C$ with $h=gf$. Because $h$ is an $\mathcal{F}$-precover, there is a factorization $l: L \rightarrow F$ such that $g=hl$. The situation is encaptured in the following diagram:
			\begin{center}
$$
	\begin{tikzcd}
		F \arrow{r}{h}\arrow{d}{f} & C \\
		L \arrow[bend left=40,squiggly]{u}{l} \arrow[dashed]{ur}[below]{g}
	\end{tikzcd}
$$
			\end{center}
			Hence, $hlf=gf=h$, and thus $lf$ is an automorphism of $F$. Therefore, $f$ is a split monomorphism, and whence $F \in \mathcal{C}$, since $\mathcal{C}$ is closed under direct summands.
	\end{proof}
By a flat cover, we mean a $\mathcal{F}_0$-cover, where $\mathcal{F}_0$ denotes the class of all flat $R$-modules. Recall that $\mathcal{F}_0$ is a covering class by the celebrated result due to Bican, El Bashir, and Enochs \cite{BBE}. 
	\begin{cor}\label{PC00}
			A tilting class $\mathcal{T}$ is closed under flat covers if and only if any flat $R$-module has a $\mathcal{T}$-preenvelope which is flat.\footnote{Here, ``$\mathcal{T}$-preenvelope'' is a shorthand for ``codomain of the $\mathcal{T}$-preenvelope'', which we will use throughout the paper. Also, note that ``flat $\mathcal{T}$-preenvelope'' is not to be confused with $(\mathcal{T} \cap \mathcal{F}_0)$-preenvelope.}
	\end{cor}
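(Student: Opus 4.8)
The plan is to obtain Corollary~\ref{PC00} as an immediate instance of Lemma~\ref{PL00}, taking $\mathcal{F}$ to be the class $\mathcal{F}_0$ of all flat $R$-modules and $\mathcal{C}$ to be the tilting class $\mathcal{T}$. Once this specialization is made, the equivalence in the corollary is literally the equivalence in the lemma, so the whole work consists in checking that the three hypotheses of Lemma~\ref{PL00} hold: that $\mathcal{F}_0$ is covering, that $\mathcal{T}$ is closed under direct summands, and that $\mathcal{T}$ is preenveloping.

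The first hypothesis is exactly the Bican--El Bashir--Enochs theorem recalled just above the statement. For the second, recall that $\mathcal{T}$ is the right-hand class of the tilting cotorsion pair $(\mathcal{A},\mathcal{T})$, so $\mathcal{T}=\mathcal{A}^\perp$; any class of the form $\mathcal{S}^\perp$ is closed under direct summands (indeed under arbitrary products and extensions), since $\ext_R^1(C,A\oplus B)\cong\ext_R^1(C,A)\oplus\ext_R^1(C,B)$. For the third, I would show that the tilting cotorsion pair is generated by a \emph{set} and is therefore complete by Theorem~\ref{T:abundant}, so that $\mathcal{T}$ is special preenveloping, hence preenveloping. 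To see that it is generated by a set, use Theorem~\ref{T:finitetype}: there is a set $\mathcal{S}$ of strongly finitely presented modules of projective dimension at most $n$ with $\mathcal{T}=\mathcal{S}^{\perp_\infty}$. Passing to syzygies, put $\mathcal{S}'=\{\Omega^jS \mid S\in\mathcal{S},\ 0\le j<n\}$; then $\ext_R^1(\Omega^jS,M)\cong\ext_R^{j+1}(S,M)$, and since $\pd S\le n$ one has $\ext_R^i(S,M)=0$ for all $i>0$ precisely when $M\in(\mathcal{S}')^\perp$. Hence $\mathcal{T}=(\mathcal{S}')^\perp$ is generated by the set $\mathcal{S}'$, and Theorem~\ref{T:abundant} applies.

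With the three hypotheses verified, Lemma~\ref{PL00} with $\mathcal{C}=\mathcal{T}$ and $\mathcal{F}=\mathcal{F}_0$ asserts that $\mathcal{T}$ is closed under $\mathcal{F}_0$-covers (that is, flat covers) if and only if every flat $R$-module $F$ admits a $\mathcal{T}$-preenvelope $f\colon F\to L$ with $L$ flat, which is precisely the claim. I do not expect a genuine obstacle here; the only mildly delicate point is the last reduction --- rewriting the finite-type description $\mathcal{S}^{\perp_\infty}$ of $\mathcal{T}$ as $(\mathcal{S}')^\perp$ for a single set $\mathcal{S}'$, so that the machinery of cotorsion pairs generated by a set becomes available --- but this is routine once one uses that the modules in $\mathcal{S}$ have bounded projective dimension.
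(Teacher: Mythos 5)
Your proposal is correct and follows essentially the same route as the paper: apply Lemma~\ref{PL00} with $\mathcal{C}=\mathcal{T}$ and $\mathcal{F}=\mathcal{F}_0$, after checking that flats are covering, that $\mathcal{T}$ is closed under summands, and that $\mathcal{T}$ is preenveloping via Theorem~\ref{T:abundant}. The only (harmless) detour is your syzygy argument to exhibit a generating set; the paper gets this for free since the tilting cotorsion pair is by definition generated by the set $\{T\}$.
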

\begin{proof}
	Straightforward from Lemma~\ref{PL00}, as tilting classes are preenveloping by Theorem~\ref{T:abundant} and clearly closed under direct summands.
\end{proof}
\subsection{Divisible modules over integral domains}
	We start with the case of an integral domain $R$ and the class $\mathcal{D}=\{M \in \ModR \mid rM=M \: \forall 0 \neq r \in R\}$ of all classical divisible $R$-modules. Then $\mathcal{D}=(\bigoplus_{0 \neq r \in R}R/rR)^{\perp_\infty}$ is a 1-tilting class in $\ModR$ via Theorem~\ref{T:finitetype}. We prove that $\mathcal{D}$ is closed under flat covers if and only if $R$ is an almost perfect domain, and in this case, all tilting classes are closed under flat covers. 	
	
	During the rest of the section, let $Q$ always denote the field of quotients of integral domain $R$. 
	\begin{lem}\label{PL20}
		Suppose that $F$ is a flat $R$-module such that $F$ admits a flat $\mathcal{D}$-preenvelope. Than the canonical map $F \rightarrow F \otimes_R Q$ is a $\mathcal{D}$-envelope of $F$.
	\end{lem}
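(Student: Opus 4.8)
The plan is to use the hypothesis to recognize the target of a flat $\mathcal{D}$-preenvelope as a $Q$-vector space, and then to check by hand that the canonical map $\iota\colon F\to F\otimes_R Q$ fulfils the two defining conditions of a $\mathcal{D}$-envelope.

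First I would record the relevant facts over the domain $R$: every flat module is torsion-free (apply $-\otimes_R F$ to $0\to R\xrightarrow{r}R$), and a module is torsion-free and divisible exactly when every nonzero element of $R$ acts bijectively on it, i.e.\ exactly when it is a $Q$-vector space; in particular every $Q$-vector space lies in $\mathcal{D}$. Consequently, if $f\colon F\to L$ is a flat $\mathcal{D}$-preenvelope, then $L$ --- being flat and divisible --- is a $Q$-vector space, while $F\otimes_R Q$ is a $Q$-vector space as well, so $F\otimes_R Q\in\mathcal{D}$, and $\iota$ is injective since $F$ is torsion-free.

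The crucial step is to show that $\iota$ is a $\mathcal{D}$-preenvelope. Because $L$ is a $Q$-module, the adjunction isomorphism $\Hom_R(F,L)\cong\Hom_Q(F\otimes_R Q,L)$, given by precomposition with $\iota$, provides $\bar f\colon F\otimes_R Q\to L$ with $\bar f\iota=f$. Then for any $g\colon F\to D$ with $D\in\mathcal{D}$, the preenvelope property of $f$ yields $g'\colon L\to D$ with $g'f=g$, and $g'\bar f\colon F\otimes_R Q\to D$ satisfies $(g'\bar f)\iota=g$; hence $\iota$ is a $\mathcal{D}$-preenvelope. I expect this to be the one point where the hypothesis really matters: without a torsion-free divisible preenvelope available, a map from $F$ into a divisible --- but possibly not torsion-free --- module need not extend along $\iota$ at all, so $\iota$ need not be a $\mathcal{D}$-preenvelope in general.

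It then remains to verify minimality, which needs no extra hypothesis. If $h\in\End_R(F\otimes_R Q)$ satisfies $h\iota=\iota$, then $h$ is automatically $Q$-linear --- for $q=a/s\in Q$ and $v\in F\otimes_R Q$ one has $s\,h(qv)=h(av)=a\,h(v)=s\,q\,h(v)$, and $F\otimes_R Q$ is torsion-free --- and $h$ fixes $\iota(F)$, which spans $F\otimes_R Q$ over $Q$ since $y\otimes q=q\,\iota(y)$. Hence $h=\mathrm{id}_{F\otimes_R Q}$, so in particular $h$ is an automorphism, and $\iota$ is a $\mathcal{D}$-envelope. This also explains the shape of the statement: the envelope (minimality) part is free, and only the preenvelope part relies on the existence of a flat $\mathcal{D}$-\emph{preenvelope}.
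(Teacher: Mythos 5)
Your proof is correct and follows essentially the same route as the paper: both use the fact that a flat divisible module over a domain is a $Q$-module to factor the given flat preenvelope $f\colon F\to L$ through $\iota\colon F\to F\otimes_R Q$, which immediately makes $\iota$ a $\mathcal{D}$-preenvelope. The only divergence is the minimality step, where the paper argues via essentiality of $\iota$ together with injectivity of $F\otimes_R Q$ (a torsion-free divisible module over a domain), while you show directly that any endomorphism fixing $\iota(F)$ is $Q$-linear and hence the identity --- a slightly more elementary verification yielding the same conclusion.
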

	\begin{proof}
			Let $f: F \rightarrow L$ be a $\mathcal{D}$-preenvelope of $F$ such that $L$ is flat, and let $\iota: F \rightarrow F \otimes_R Q$ denote the canonical map. Since $L$ is flat (and thus torsion-free), and divisible, we have a natural isomorphism $L \otimes_R Q \simeq L$. Then tensoring $f$ by $Q$ shows that $f$ factors through $\iota$. This already implies that $\iota$ is a $\mathcal{D}$-preenvelope of $F$. Because $\iota$ is an essential monomorphism, it is easy to deduce that it has to be a $\mathcal{D}$-envelope. Indeed, any endomorphism $f \in \End_R(Q)$ such that $f\iota=\iota$ has to be a monomorphism by the essentiality of the monomorphism $\iota$. Since $Q$ is injective, $f$ is a split monomorphism, and thus $f$ is an isomorphism again by essentiality of $\iota$.
	\end{proof}
	Recall that an integral domain $R$ is called \emph{Matlis}, if $\pd Q \leq 1$. 
	\begin{lem}
			\label{PL02}
			Let $R$ be an integral domain. If $\mathcal{D}$ is closed under flat covers, then $R$ is Matlis.
	\end{lem}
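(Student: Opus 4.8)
The plan is to feed the module $R$ itself into the machinery of Corollary~\ref{PC00} and Lemma~\ref{PL20}. Since $\mathcal{D}$ is a $1$-tilting class which is assumed closed under flat covers, Corollary~\ref{PC00} guarantees that the flat module $R$ admits a flat $\mathcal{D}$-preenvelope. Lemma~\ref{PL20} then upgrades this: the canonical map $\iota\colon R \to R\otimes_R Q \cong Q$ --- which is nothing but the inclusion $R \hookrightarrow Q$ --- is a $\mathcal{D}$-envelope of $R$.

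The next step is to exploit that $\iota$ is a \emph{monic} envelope. By the Wakamatsu Lemma (\cite[Lemma 5.13]{GT}) a monic $\mathcal{D}$-envelope is a special $\mathcal{D}$-preenvelope, so its cokernel $Q/R$ lies in ${}^\perp\mathcal{D}$. Now ${}^\perp\mathcal{D}$ is the left-hand class $\mathcal{A}$ of the $1$-tilting cotorsion pair $(\mathcal{A},\mathcal{D})$, which is generated by the set $\mathcal{S}=\{R/rR \mid 0\neq r\in R\}$ of modules of projective dimension at most $1$. By Theorem~\ref{T:abundant}, every module in $\mathcal{A}$ is a direct summand of an $(\mathcal{S}\cup\{R\})$-filtered module; since the class of modules of projective dimension $\leq 1$ is closed under filtrations (a standard consequence of the Eklof lemma: vanishing of $\ext_R^2(-,N)$ passes to filtered modules, see \cite{GT}) and under direct summands, we get $\pd_R(Q/R)\leq 1$.

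Finally, applying $\ext_R^\bullet(-,N)$ to the short exact sequence $0 \to R \to Q \to Q/R \to 0$, whose left-hand term is projective and whose quotient has projective dimension at most $1$, yields $\ext_R^2(Q,N)=0$ for every $N\in\ModR$, i.e. $\pd_R Q \leq 1$. This is exactly the assertion that $R$ is Matlis.

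I do not foresee a genuine obstacle here: the argument is a direct chaining of Corollary~\ref{PC00}, Lemma~\ref{PL20}, and the Wakamatsu Lemma. The only point deserving explicit justification rather than a one-line remark is that ${}^\perp\mathcal{D}$ consists of modules of projective dimension at most $1$, which is a routine combination of Theorem~\ref{T:abundant} with the filtration-stability of bounded projective dimension.
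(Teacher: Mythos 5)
Your proof is correct, but it concludes differently from the paper. Both arguments begin identically: Corollary~\ref{PC00} gives a flat $\mathcal{D}$-preenvelope of the flat module $R$, and Lemma~\ref{PL20} upgrades the inclusion $R\hookrightarrow Q$ to a $\mathcal{D}$-envelope. The paper then applies this to all free modules $R^{(\varkappa)}$, observes that the preenvelopes $R^{(\varkappa)}\to Q^{(\varkappa)}$ force $\mathcal{D}=\Gen(Q)$, and invokes the external result \cite[Theorem 1.1]{AHT} to deduce $\pd Q\leq 1$. You instead stay with the single envelope $R\to Q$: Wakamatsu makes it a special $\mathcal{D}$-preenvelope, so $Q/R\in{}^\perp\mathcal{D}$, and since $({}^\perp\mathcal{D},\mathcal{D})$ is the cotorsion pair generated by the set $\{R/rR\mid 0\neq r\in R\}$ of modules of projective dimension $\leq 1$, Theorem~\ref{T:abundant} together with the closure of $\mathcal{P}_1$ under filtrations and direct summands gives $\pd(Q/R)\leq 1$, hence $\pd Q\leq 1$. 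All steps check out: $\ext_R^1(R/rR,M)\cong M/rM$ for $r$ regular, so the right class of that cotorsion pair really is $\mathcal{D}$, and the filtration-stability of $\mathcal{P}_1$ is standard. Your route is more self-contained --- it replaces the Angeleri H\"ugel--Herbera--Trlifaj theorem by the Wakamatsu Lemma plus the deconstruction of ${}^\perp\mathcal{D}$, and in fact the key inclusion ${}^\perp\mathcal{D}\subseteq\mathcal{P}_1$ that you prove explicitly is exactly what the paper uses without comment one lemma later, in the proof of Lemma~\ref{PL01}. What the paper's route buys is brevity and a conceptual link to the generation of $\mathcal{D}$ by $Q$, which resurfaces in Proposition~\ref{P:main3}, where the $\Gen$-based argument is the one that generalizes to arbitrary multiplicative sets.
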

	\begin{proof}
			If $\mathcal{D}$ is closed under flat covers, then Lemma~\ref{PL00}, followed by Lemma~\ref{PL20}, yields that the canonical map $\iota: R^{(\varkappa)} \rightarrow Q^{(\varkappa)}$ is a $\mathcal{D}$-preenvelope for all cardinals $\varkappa$. But then $Q$ generates $\mathcal{D}$, which implies $\pd Q \leq 1$ by \cite[Theorem 1.1]{AHT}.
	\end{proof}

	\begin{lem}
			\label{PL01}
		Let $R$ be a Matlis domain. If there is a flat module $F$ such that $\pd F >1$, then the class $\mathcal{D}$ of all divisible $R$-modules is not closed under flat covers.
	\end{lem}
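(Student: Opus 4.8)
The plan is to argue by contradiction. Assume that $\mathcal{D}$ is closed under flat covers; I will show that then \emph{every} flat $R$-module has projective dimension at most $1$, which contradicts the existence of a flat module $F$ with $\pd F>1$.

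So let $G$ be an arbitrary flat $R$-module. Since $\mathcal{D}$ is a tilting class closed under flat covers, Corollary~\ref{PC00} supplies a flat $\mathcal{D}$-preenvelope of $G$, so Lemma~\ref{PL20} applies: the canonical map $\iota\colon G\to G\otimes_R Q$ (a monomorphism, because the flat module $G$ is torsion-free) is a $\mathcal{D}$-envelope of $G$. By the Wakamatsu Lemma \cite[Lemma 5.13]{GT}, a monic envelope is special, hence $D:=\Coker(\iota)\in{}^\perp\mathcal{D}$.

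Next I would pin down ${}^\perp\mathcal{D}$ inside the modules of small projective dimension, and then run a dimension count. As recalled above, $\mathcal{D}=(\bigoplus_{0 \neq r \in R}R/rR)^{\perp_\infty}$ is a $1$-tilting class, so by Theorem~\ref{T:abundant} every module in ${}^\perp\mathcal{D}$ is a direct summand of a module filtered by copies of the modules $R/rR$ ($0\neq r\in R$) and of $R$; all of these have projective dimension at most $1$, and the class of modules of projective dimension at most $1$ is closed under direct summands and under transfinite extensions (a standard dimension-shifting argument combined with the Eklof Lemma). Hence $\pd_R D\le 1$. Now consider the exact sequence $0\to G\xrightarrow{\iota} G\otimes_R Q\to D\to 0$. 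The module $G\otimes_R Q$ is a $Q$-vector space, hence isomorphic as an $R$-module to a direct sum of copies of $Q$, so $\pd_R(G\otimes_R Q)=\pd_R Q\le 1$ since $R$ is Matlis (the case $G=0$ being trivial). Feeding $\pd_R(G\otimes_R Q)\le 1$ and $\pd_R D\le 1$ into the long exact sequence $\ext^i_R(D,N)\to\ext^i_R(G\otimes_R Q,N)\to\ext^i_R(G,N)\to\ext^{i+1}_R(D,N)$ for $i\ge 2$ and all $N\in\ModR$ forces $\ext^i_R(G,N)=0$, that is, $\pd_R G\le 1$. As $G$ was an arbitrary flat module, this applies in particular to $F$ and contradicts $\pd F>1$.

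The only mildly delicate step I anticipate is the third paragraph — verifying that ${}^\perp\mathcal{D}$ really lies inside the class of modules of projective dimension at most $1$ and that this class is closed under the transfinite extensions produced by Theorem~\ref{T:abundant}. Everything else is a short diagram/Ext chase, and the Matlis hypothesis enters only through the bound on $\pd_R(G\otimes_R Q)$.
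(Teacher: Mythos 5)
Your proposal is correct and follows essentially the same route as the paper: reduce via Lemma~\ref{PL00}/Corollary~\ref{PC00} to the existence of a flat $\mathcal{D}$-preenvelope, invoke Lemma~\ref{PL20} and the Wakamatsu Lemma to see that $0\to F\to F\otimes_R Q\to \Coker(\iota)\to 0$ has cokernel in ${}^\perp\mathcal{D}$, hence of projective dimension at most $1$, and conclude $\pd F\le 1$ from the Matlis hypothesis. The only difference is that you spell out, via Theorem~\ref{T:abundant} and the Eklof Lemma, why ${}^\perp\mathcal{D}$ lies in $\mathcal{P}_1$, a step the paper leaves implicit (it can also be quoted directly as the cotorsion pair $(\mathcal{P}_1,\mathcal{D})$ from \cite[Proposition 6.3]{BH2}).
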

	\begin{proof}
			Using Lemma~\ref{PL00}, it is enough to show that $F$ has no flat $\mathcal{D}$-preenvelope. Towards a contradiction, suppose otherwise. Therefore, by Lemma~\ref{PL20}, the canonical map $\iota: F \rightarrow F \otimes_R Q$ is a $\mathcal{D}$-envelope. By the Wakamatsu Lemma (\cite[Lemma 5.13]{GT}), $\iota$ is a special $\mathcal{D}$-preenvelope, and thus $\pd \Coker(\iota) \leq 1$. But since $F \otimes_R Q$ is isomorphic to a direct sum of copies of $Q$, we conclude from $R$ being a Matlis domain that $\pd (F \otimes_R Q) \leq 1$, and thus $\pd F \leq 1$, a contradiction.
	\end{proof}
	An integral domain is said to be \emph{almost perfect} if any proper factor of $R$ is a perfect ring (i.e., a ring such that any flat module is projective). These domains were introduced by Bazzoni and Salce and have many equivalent characterizations (see e.g. \cite{Salce}). We list just a few of them, which will be useful in what follows. Recall that an integral domain $R$ is \emph{h-local} if the following two conditions hold:
	\begin{enumerate}
		\item $R$ has finite character, that is, each non-zero element belongs to only finitely many maximal ideals of $R$, and
		\item every non-zero prime ideal of $R$ is contained in precisely one maximal ideal of $R$.
	\end{enumerate}
	\begin{thm}{\emph{\cite[Main Theorem]{Salce}}}\label{T:apd}
		For an integral domain $R$, the following conditions are equivalent:
			\begin{enumerate}
				\item $R$ is almost perfect,
				\item $R$ is $h$-local, and every localization of $R$ is almost perfect,
				\item every $R$-module of weak dimension $\leq 1$ has projective dimension $\leq 1$.
			\end{enumerate}
	\end{thm}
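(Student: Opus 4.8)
The plan is to prove $(1)\Leftrightarrow(2)$ by hand from the structure theory of commutative perfect rings, and to obtain $(1)\Leftrightarrow(3)$ by reformulating $(3)$ homologically and then invoking the structure of flat modules over a Matlis domain.

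For $(1)\Leftrightarrow(2)$ the key elementary observation is that a perfect integral domain is a field: if $R$ is a perfect domain, the chain $aR\supseteq a^2R\supseteq\cdots$ stabilises for every $a$, so $a^n=a^{n+1}b$ and hence $ab=1$. Consequently, in an almost perfect domain every nonzero prime is maximal (as $R/\mathfrak p$ is then a perfect domain), so $\dim R\le 1$ and the ``unique maximal ideal'' clause of $h$-locality is automatic; moreover $R/rR$ is perfect, hence semilocal, so each $0\ne r$ lies in only finitely many maximal ideals, which is finite character. This gives $(1)\Rightarrow h$-locality, and for the localisation clause one notes that a nonzero ideal of $S^{-1}R$ has the form $S^{-1}J$ with $J$ its nonzero contraction, and $S^{-1}R/S^{-1}J\cong S^{-1}(R/J)$ is a localisation of the perfect ring $R/J$ --- localisations of commutative perfect rings being again perfect, since such a ring is a finite product of local rings with $T$-nilpotent (in particular nil) maximal ideal and localising merely retains a subproduct. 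Conversely, assuming $(2)$: for $0\ne I\trianglelefteq R$, finite character makes $V(I)$ a finite set of maximal ideals, and by $\dim R\le 1$ (forced by the $1$-dimensionality of the localisations) these exhaust $\Spec(R/I)$, so $\Spec(R/I)$ is finite and discrete; the idempotents cutting out its points then yield $R/I\cong\prod_i R_{\mathfrak m_i}/IR_{\mathfrak m_i}$, and each factor, being a proper quotient of the almost perfect local domain $R_{\mathfrak m_i}$, is perfect. A finite product of perfect rings is perfect, so $R$ is almost perfect.

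For $(1)\Leftrightarrow(3)$, I would first record the reformulation: in a free resolution of $M$ the first syzygy $\Omega M$ is flat precisely when $M$ has weak dimension $\le 1$ and is projective precisely when $\pd M\le 1$, while every flat submodule of a free module arises as such an $\Omega M$; hence $(3)$ is equivalent to the assertion that every flat submodule of a projective $R$-module is projective. For $(3)\Rightarrow(1)$ one first observes that $Q$ is flat, so $\pd Q\le 1$ and $R$ is Matlis, and then pushes the reformulated $(3)$ through the Bazzoni--Salce description of flat modules over a Matlis domain to conclude that each proper quotient of $R$ is perfect. For $(1)\Rightarrow(3)$, using $(1)\Leftrightarrow(2)$ one reduces, via finite character, to the local case; over a local almost perfect domain $R_{\mathfrak m}$ the countably generated flat modules are the direct sums of copies of $R_{\mathfrak m}$ and $Q$, and flat modules are strongly flat in general, so a flat submodule of a free module --- being torsion-free and reduced --- carries no ``$Q$-part'' and must be projective; finite character then globalises this.

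I expect the real obstacle to be $(1)\Leftrightarrow(3)$, specifically the interaction between the homological condition and the module theory: that $h$-locality genuinely licenses the reduction to the local case for a possibly huge, non-finitely-presented module, and that over a local almost perfect domain a flat submodule of a free module is free. This is exactly where the Bazzoni--Salce theory of almost perfect domains --- the classification of flat and strongly flat modules, together with the Matlis category equivalence --- is doing the work, and I would not anticipate a short self-contained argument for that half; $(1)\Leftrightarrow(2)$, by contrast, is essentially bookkeeping with perfect rings.
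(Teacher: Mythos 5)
First, a remark on the comparison: the paper does not prove this theorem at all --- it is imported verbatim from Salce's survey as the ``Main Theorem'' --- so your attempt can only be measured against the literature proof. Your treatment of $(1)\Leftrightarrow(2)$ is essentially correct and complete: a perfect domain is a field (DCC on principal ideals), hence $\dim R\le 1$ and the uniqueness clause of $h$-locality is automatic; finite character follows from semilocality of the perfect rings $R/rR$; perfectness passes to localizations via the finite-product-of-local-rings structure; and the converse works by decomposing $R/I$ along its finite discrete spectrum. Your reformulation of $(3)$ as ``every flat submodule of a projective module is projective'' is also correct, via Schanuel.

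The genuine gap is in $(1)\Leftrightarrow(3)$, and it is not only the expected reliance on Bazzoni--Salce: one of your intermediate claims is false. Over a local almost perfect domain the countably generated flat modules are \emph{not} all direct sums of copies of $R_\mathfrak{m}$ and $Q$. Already over $R=\mathbb{Z}_{(p)}$ (a DVR, hence local almost perfect) the classical pull-back of $\mathbb{Z}_{(p)}^2$ along a $p$-adic irrational, $G=\langle e_1,\,e_2,\,p^{-n}(e_1+a_ne_2):n\ge 1\rangle$ with $a_n\to\alpha\in\mathbb{Z}_p\setminus\mathbb{Q}$, is a countably generated, reduced, indecomposable torsion-free (hence flat) module of rank $2$. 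What is true is the much weaker statement that flat modules over an almost perfect domain are \emph{strongly flat}, i.e.\ direct summands of extensions of $Q^{(\beta)}$ by $R^{(\alpha)}$, and extracting ``flat submodule of a free module $\Rightarrow$ projective'' from strong flatness, together with the $h$-local and Kaplansky-style reductions for arbitrarily large modules, is precisely the technical core of the theorem rather than a formality. Symmetrically, your $(3)\Rightarrow(1)$ only establishes that $R$ is Matlis; the step to ``every proper quotient is perfect'' is deferred to an unspecified ``description of flat modules over a Matlis domain'' that is not available off the shelf. (A workable route there: a flat $R/rR$-module is a direct limit of free $R/rR$-modules, hence has weak dimension $\le 1$ over $R$, hence by $(3)$ projective dimension $\le 1$ over $R$; one then needs a change-of-rings argument to convert this into projectivity over $R/rR$.) So half the theorem is proved, the other half is an honest pointer to the source rather than a proof, and the one concrete structural claim offered in its support is incorrect.
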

	We also note some properties of almost perfect domains:
	\begin{lem}\label{PL15}
			Let $R$ be an almost perfect domain. Then $R$ is Matlis, and has Krull dimension at most 1. A noetherian domain of Krull dimension at most 1 is almost perfect.
	\end{lem}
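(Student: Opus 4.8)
The plan is to establish the three assertions separately, using the characterizations of almost perfect domains collected in Theorem~\ref{T:apd} together with the classical fact (Bass) that a ring is perfect precisely when it satisfies the descending chain condition on principal ideals.

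For the first assertion, observe that $Q$ is a localization of $R$ and hence flat, so it has weak dimension $0$, in particular weak dimension at most $1$. By the equivalence of conditions (1) and (3) in Theorem~\ref{T:apd}, every $R$-module of weak dimension at most $1$ has projective dimension at most $1$; applying this to $Q$ gives $\pd Q \leq 1$, which is precisely the assertion that $R$ is Matlis.

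For the bound on the Krull dimension I would argue directly from the definition. Let $\mathfrak p$ be a nonzero prime ideal of $R$. Then $R/\mathfrak p$ is a proper factor of $R$, hence a perfect ring, hence an integral domain satisfying the descending chain condition on principal ideals. Such a domain must be a field: if $a$ were a nonzero non-unit of $R/\mathfrak p$, the descending chain of principal ideals $(a) \supseteq (a^2) \supseteq \cdots$ would stabilize, so $a^n = a^{n+1}b$ for some $n$ and some $b$, and cancelling $a^n$ (possible in a domain) would give $1 = ab$, contradicting that $a$ is a non-unit. Thus every nonzero prime of $R$ is maximal, i.e.\ $\dim R \leq 1$.

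For the last assertion, let $R$ be a noetherian domain with $\dim R \leq 1$ and let $I$ be a nonzero ideal; I must check that $R/I$ is perfect. Every prime of $R$ containing $I$ is nonzero, hence maximal by hypothesis, so all primes of $R/I$ are maximal and $R/I$ has Krull dimension $0$. Being also noetherian, $R/I$ is then Artinian, and an Artinian ring satisfies the descending chain condition on principal ideals, hence is perfect. Therefore every proper factor of $R$ is perfect, i.e.\ $R$ is almost perfect. There is no genuine obstacle in this argument: all the substance is carried by Theorem~\ref{T:apd} and by the two standard facts just invoked — Bass's criterion for perfectness and the Hopkins--Levitzki fact that a noetherian ring of Krull dimension $0$ is Artinian.
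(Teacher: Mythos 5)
Your proof is correct, but it is genuinely different from what the paper does: the paper disposes of all three claims by citation (the first two to \cite[Proposition 3.5]{Salce}, the last to \cite[Proposition 5.1]{Salce}), whereas you give a self-contained argument. Your derivation of the Matlis property from condition (3) of Theorem~\ref{T:apd} applied to the flat module $Q$ is clean and legitimate, since that theorem is quoted as an external black box; and your treatment of the other two claims reduces everything to Bass's Theorem~P (perfect $\Leftrightarrow$ DCC on principal ideals, which for commutative rings has no left/right subtlety), from which ``perfect domain $\Rightarrow$ field'' and ``Artinian $\Rightarrow$ perfect'' both follow immediately. The only blemish is an attribution: the fact that a noetherian ring of Krull dimension $0$ is Artinian is Akizuki's theorem (e.g.\ Atiyah--Macdonald, Theorem 8.5); Hopkins--Levitzki is the converse implication (Artinian $\Rightarrow$ noetherian). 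What your approach buys is transparency -- the reader sees exactly which standard facts carry the lemma -- at the cost of importing Theorem~\ref{T:apd} for the Matlis claim, where Salce's cited proposition presumably argues more directly; what the paper's approach buys is brevity.
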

	\begin{proof}
			The first two properties are proved in \cite[Proposition 3.5]{Salce}. The last claim is \cite[Proposition 5.1]{Salce}.
	\end{proof}
	Before proceeding with studying flat covers, we prove some properties of $h$-local domains of Krull dimension 1 concerning Thomason sets and the tilting modules. By $\mSpec(R)$ we denote the subset of $\Spec(R)$ consisting of all maximal ideals. Reader interested only in flat cover closure only needs to be concerned with Proposition~\ref{PP01}, and then skip to Lemma~\ref{PL03}. Also, given a maximal ideal $\mathfrak{m}$ we denote by $R_\mathfrak{m}$ the localization of $R$ in $\mathfrak{m}$, and for any subset $X$ of $\mSpec(R)$ let $R_X = \bigcap_{\mathfrak{m} \in X} R_\mathfrak{m}$. If $\mathfrak{m} \in \mSpec(R)$, we denote for convenience by $[\mathfrak{m}]=\mSpec(R) \setminus \{\mathfrak{m}\}$ the complement of $\mathfrak{m}$ in $\mSpec(R)$. Given a module $M$, we adopt the shorthand notation $M_\mathfrak{m} = M \otimes_R R_\mathfrak{m}$.
	\begin{prop}\label{PP01}
		Let $R$ be an integral domain of Krull dimension 1. Then any subset of $\mSpec(R)$ is Thomason if and only if $R$ is $h$-local.
	\end{prop}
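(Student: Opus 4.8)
The plan is to first record two reductions. Since $R$ has Krull dimension $1$, every nonzero prime of $R$ is maximal, so condition (ii) in the definition of an $h$-local domain holds vacuously; hence "$R$ is $h$-local" is equivalent to "$R$ has finite character", and since $V(rR)$ consists of maximal ideals for every $0\neq r\in R$, this is in turn equivalent to: $V(I)$ is finite for every nonzero finitely generated ideal $I$. Secondly, a subset $X\subseteq\mSpec(R)$, regarded inside $\Spec(R)$, is automatically specialization closed, so $X$ is Thomason if and only if for each $\mathfrak m\in X$ there is a nonzero finitely generated ideal $I\subseteq\mathfrak m$ with $V(I)\subseteq X$ (nonzeroness of $I$ being automatic since $R$ is not a field), in which case $X=\bigcup_{\mathfrak m\in X}V(I_\mathfrak m)$. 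In particular it suffices to understand when singletons $\{\mathfrak m\}$ are Thomason, which happens precisely when $\mathfrak m=\sqrt{I}$ for some finitely generated ideal $I$.

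For the implication "$h$-local $\Rightarrow$ every subset of $\mSpec(R)$ is Thomason", I would show each singleton $\{\mathfrak m\}$ is Thomason. Pick $0\neq r\in\mathfrak m$; by finite character $V(rR)=\{\mathfrak m,\mathfrak m_1,\dots,\mathfrak m_k\}$ is finite. Since distinct maximal ideals cannot cover $\mathfrak m$, prime avoidance yields $t\in\mathfrak m$ with $t\notin\mathfrak m_j$ for $j=1,\dots,k$ (for $k=0$ take $t=r$), and then $I=(r,t)\subseteq\mathfrak m$ satisfies $V(I)=V(rR)\cap V(tR)=\{\mathfrak m\}$. By the reduction above, every $X\subseteq\mSpec(R)$ is then Thomason.

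For the converse, assume every subset of $\mSpec(R)$ is Thomason; then every singleton is, so every maximal ideal $\mathfrak m$ is the radical of a finitely generated ideal $I_\mathfrak m$. Fix $0\neq r\in R$ and put $A=R/rR$. As every nonzero prime of $R$ is maximal and $(0)\notin V(rR)$, the ring $A$ is zero-dimensional, so $\Spec(A)=V(rR)$ is a Stone space; in particular it is quasi-compact, and every quasi-compact open subset of it is clopen. Now for $\mathfrak m\in V(rR)$ the ideal $J=I_\mathfrak m+rR$ is finitely generated with $\sqrt J=\mathfrak m$ (the inclusion $\sqrt J\subseteq\sqrt{\mathfrak m}=\mathfrak m$ holds because $r\in\mathfrak m=\sqrt{I_\mathfrak m}$, and $\sqrt J\supseteq\sqrt{I_\mathfrak m}=\mathfrak m$ is clear), so its image $\bar J\subseteq A$ is finitely generated with $V(\bar J)=\{\bar{\mathfrak m}\}$. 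Hence $\Spec(A)\setminus\{\bar{\mathfrak m}\}=D(\bar J)$ is quasi-compact open, therefore clopen, so $\{\bar{\mathfrak m}\}$ is open and $\bar{\mathfrak m}$ is isolated. As this holds for every point, $\Spec(A)$ is discrete, hence finite (being quasi-compact). Thus $V(rR)$ is finite for every $0\neq r$, i.e. $R$ has finite character and is $h$-local.

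The step I expect to be the main obstacle is the converse direction: the naive candidates for a non-Thomason set (for instance $\mSpec(R)$ with all but one element of $V(rR)$ removed) do not work, since a priori one has no control over which maximal ideals are cut out by finitely generated ideals, and quasi-compactness of $V(rR)$ by itself gives nothing. The decisive move is to pass to the zero-dimensional quotient $R/rR$, where quasi-compact opens are clopen, so that "the complement of a point is quasi-compact" upgrades to "the point is isolated", and then use quasi-compactness to force finiteness. (One could replace the appeal to Stone spaces by the observation that $(R/rR)_{\mathrm{red}}$ is von Neumann regular, so $D(\bar a)$ is clopen for every $\bar a$, hence $D(\bar J)$ is clopen for every finitely generated $\bar J$.)
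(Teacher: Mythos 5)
Your proof is correct. The forward implication is essentially the paper's argument (you use prime avoidance to produce a single element $t$ outside all the extraneous maximal ideals, where the paper adjoins one element per such ideal --- an immaterial difference). The converse, however, takes a genuinely different route. The paper does not exploit the fact that in Krull dimension one $h$-locality reduces to finite character; instead it invokes the Fuchs--Salce criterion \cite[IV.3, Theorem 3.7]{NN} and verifies that $R_\mathfrak{m} \otimes_R R_{[\mathfrak{m}]}$ is divisible for each maximal ideal $\mathfrak{m}$, by partitioning $[\mathfrak{m}]$ according to the generators of a finitely generated ideal cutting out $\{\mathfrak{m}\}$ and analysing divisibility of the resulting intersection of localizations. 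You instead prove finite character directly by a point-set argument: $R/rR$ is zero-dimensional, so its spectrum is a Boolean space in which quasi-compact opens are clopen; each point $\bar{\mathfrak{m}}$ has complement $D(\bar J)$ with $\bar J$ finitely generated, hence quasi-compact, hence clopen, so every point is isolated and quasi-compactness forces $V(rR)$ to be finite. All the facts you use are standard (zero-dimensional implies $\Spec$ Hausdorff, compact subsets of Hausdorff spaces are closed, $D(\bar J)$ is quasi-compact for $\bar J$ finitely generated), and your alternative via von Neumann regularity of $(R/rR)_{\mathrm{red}}$ also works. Your converse is more elementary and self-contained, avoiding the module-theoretic machinery of \cite{NN} entirely; the paper's route keeps the argument in the language of divisibility of intersections of localizations, which recurs elsewhere in the text, but yours isolates more transparently where one-dimensionality is used.
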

	\begin{proof}
					Let $R$ be of Krull dimension 1. First, assume that $R$ is $h$-local and let us show that any set of maximal ideals is Thomason. It is clearly enough to show that any singleton set $\{\mathfrak{m}\} \subseteq \mSpec(R)$ is Thomason, that is, find a finitely generated ideal $I$ such that $V(I)=\{\mathfrak{m}\}$. Pick any non-zero element $x_0 \in \mathfrak{m}$. Since $R$ is $h$-local, $V(x_0)$ is a finite set containing $\mathfrak{m}$. Now for any $\mathfrak{n} \in V(x_0)$ not equal to $\mathfrak{m}$, there is an element $x_\mathfrak{n} \in \mathfrak{m}$ such that $x_\mathfrak{n} \not\in \mathfrak{n}$. Hence, the ideal $I$ generated by $x_0$ and all the $x_\mathfrak{n}$'s is a finitely generated ideal satisfying $V(I)=\{\mathfrak{m}\}$.

							Conversely, suppose that for any maximal ideal $\mathfrak{m}$ there is an ideal $I$ generated by $x_1,\ldots,x_n$ such that $V(I) = \{\mathfrak{m}\}$, and let us show that $R$ is $h$-local. By \cite[IV.3. Theorem 3.7]{NN}, it is enough to show that for each $\mathfrak{m} \in \mSpec(R)$, the module $A=R_\mathfrak{m} \otimes_R R_{[\mathfrak{m}]}$ is divisible (and thus, isomorphic to $Q$). There is a disjoint partition of $[\mathfrak{m}]=X_1 \cup X_2\ \cup \ldots \cup X_n$ such that for each $i=1,\ldots,n$, $x_i$ is not contained in any maximal ideal from the set $X_i$. It follows that $R_{X_i}=\bigcap_{\mathfrak{n} \in X_i}R_\mathfrak{n}$ is divisible by $x_i$; indeed, $\frac{1}{x_i} \in R_\mathfrak{n}$ for each $\mathfrak{n} \in X_i$. As localizing commutes with finite intersections, we have that $A=\bigcap_{1 \leq i \leq n} A_i$, where $A_i=R_\mathfrak{m} \otimes_R R_{X_i}$. Let $S_i=\{x_i^n \mid n\geq 0\}$ be the multiplicative set generated by $x_i$. Then $A_iS_i^{-1}=(R_\mathfrak{m} S_i^{-1}) \otimes_R R_{X_i}$ is a divisible $R$-module, because $x_i \in \mathfrak{m}$, and $R_\mathfrak{m}$ is a 1-dimensional local domain. Since $x_i \not \in \bigcup X_i$, the module $A_i$ is $S_i$-divisible, that is $A_i=x_iA_i$. On the other hand, we showed that the localization $A_iS_i^{-1}$ is divisible (by any non-zero element of $R$). It follows that $A_i$ is divisible for each $i$. Therefore, $A=\bigcap_{1 \leq i \leq n}A_i$ is an intersection of torsion-free divisible modules, which is always divisible, as desired.
	\end{proof}
	\begin{lem}\label{PL89}
		Let $R$ be an integral domain of Krull dimension at most 1. Then any tilting class is 1-tilting.
	\end{lem}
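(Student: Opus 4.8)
The plan is to lean on the classification of tilting classes, Theorem~\ref{T:HS}, together with the description of $1$-tilting classes in the remark following it. Let $\mathcal{T}$ be a tilting class. A $0$-tilting module is also $1$-tilting (take $T_1:=0$), so we may assume $\mathcal{T}$ is $n$-tilting for some $n\ge 1$, corresponding to a characteristic sequence $(X_0,X_1,\dots,X_{n-1})$ in $\Spec(R)$. First I would check that $(0)\notin X_0$: since $X_0$ is Thomason, $(0)\in X_0$ would force $\Spec(R)=V(0)\subseteq X_0$, and then condition (ii) of a characteristic sequence, applied to the finitely generated ideal $I=0$, would demand $\Hom_R(R,R)=R=0$, which is absurd. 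As $R$ is a domain of Krull dimension at most $1$, $\Spec(R)=\{(0)\}\cup\mSpec(R)$, so all the $X_i$ lie in $\mSpec(R)$; in particular any finitely generated ideal $I$ with $V(I)\subseteq X_i$ is nonzero, and it is proper precisely when $V(I)\neq\emptyset$.

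The core of the proof is to show $X_1=X_2=\dots=X_{n-1}=\emptyset$, and since $X_i\subseteq X_1$ it suffices to prove $X_1=\emptyset$. If $X_1\neq\emptyset$, then by Thomason-ness there is a finitely generated ideal $I$ with $\emptyset\neq V(I)\subseteq X_1$, so $I$ is a proper nonzero finitely generated ideal, and condition (ii) (valid since $1<n$) gives $\ext{}_R^1(R/I,R)=0$. This will contradict the following claim, which I expect to be the main obstacle.

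\smallskip
\noindent\emph{Claim.} For every proper nonzero finitely generated ideal $I$ of a domain $R$ of Krull dimension at most $1$ one has $\ext{}_R^1(R/I,R)\neq 0$.
\smallskip

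\noindent (If $R$ is noetherian this is immediate, since $\Hom_R(R/I,R)=\Ann_R(I)=0$ while $\operatorname{grade}(I,R)\le\operatorname{ht}(I)\le 1$; the content is to dispense with noetherianity.) To prove the Claim I would fix $0\neq a\in I$ and apply $\Hom_R(R/I,-)$ to $0\to R\xrightarrow{a}R\to R/aR\to 0$. Since $\Hom_R(R/I,R)=\Ann_R(I)=0$ and $\ext{}_R^1(R/I,R)$ is annihilated by $I$, hence by $a$, the connecting homomorphism yields a natural isomorphism $\ext{}_R^1(R/I,R)\cong\Hom_R(R/I,R/aR)=\Ann_{R/aR}(I/aR)$. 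Write $S=R/aR$; as $R/I$ is finitely presented over $S$, this annihilator commutes with localization, so it is nonzero once $\Ann_{S_\mathfrak{n}}((I/aR)_\mathfrak{n})\neq 0$ for some maximal ideal $\mathfrak{n}$ of $S$. Choosing a maximal ideal $\mathfrak{m}$ of $R$ with $I\subseteq\mathfrak{m}$, the corresponding localization of $S$ is $R_\mathfrak{m}/aR_\mathfrak{m}$, and here the dimension hypothesis enters: the only primes of $R_\mathfrak{m}$ are $(0)$ and $\mathfrak{m}R_\mathfrak{m}$, and $a\neq 0$ avoids $(0)$, so $R_\mathfrak{m}/aR_\mathfrak{m}$ is a nonzero ring with a unique prime ideal, whose maximal ideal is consequently nil. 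The proper finitely generated ideal $I_\mathfrak{m}/aR_\mathfrak{m}$ of this ring is therefore nilpotent; if it is $0$ its annihilator is the whole (nonzero) ring, and otherwise, taking the least $N$ with $(I_\mathfrak{m}/aR_\mathfrak{m})^N=0$, the nonzero ideal $(I_\mathfrak{m}/aR_\mathfrak{m})^{N-1}$ lies in the annihilator. In all cases $\Ann_{R_\mathfrak{m}/aR_\mathfrak{m}}(I_\mathfrak{m}/aR_\mathfrak{m})\neq 0$, which proves the Claim and hence forces $X_1=\dots=X_{n-1}=\emptyset$.

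With the higher $X_i$ empty, Theorem~\ref{T:HS} shows that $\mathcal{T}$ is defined by the conditions $\tor{}_0^R(R/I,M)=0$ for $V(I)\subseteq X_0$ alone, that is, $\mathcal{T}=\{M\in\ModR\mid M=IM \text{ for every finitely generated } I \text{ with } V(I)\subseteq X_0\}$. Finally, $(X_0)$ is a characteristic sequence of length $1$ (condition (i) is vacuous, and condition (ii) for $i=0$ requires $\Hom_R(R/I,R)=0$ for $V(I)\subseteq X_0$, which holds as such $I$ are nonzero in the domain $R$), so Theorem~\ref{T:HS} identifies $\mathcal{T}$ with the $1$-tilting class attached to $(X_0)$. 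Thus $\mathcal{T}$ is $1$-tilting.
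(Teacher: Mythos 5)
Your proof is correct, and the overall skeleton agrees with the paper's: both arguments reduce the statement, via Theorem~\ref{T:HS} and the definition of a characteristic sequence, to showing that $\ext_R^1(R/I,R)\neq 0$ for every proper nonzero finitely generated ideal $I$ of a (non-field) domain of dimension at most $1$, which forces $X_1=\dots=X_{n-1}=\emptyset$. Where you genuinely diverge is in the proof of this non-vanishing. The paper applies $\Hom_R(R/I,-)$ to $0\to R\to Q\to Q/R\to 0$, obtains $\ext_R^1(R/I,R)\simeq\Hom_R(R/I,Q/R)$, localizes, and then invokes the classification of hereditary torsion pairs over a one-dimensional local domain to conclude that $Q/R_\mathfrak{m}$ would have to be torsion-free, forcing $R_\mathfrak{m}=Q$. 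You instead apply $\Hom_R(R/I,-)$ to $0\to R\xrightarrow{a}R\to R/aR\to 0$ for a fixed $0\neq a\in I$, use that $\ext_R^1(R/I,R)$ is killed by $a\in I$ to identify it with $\Ann_{R/aR}(I/aR)$, localize at a maximal ideal $\mathfrak{m}\supseteq I$, and observe that $R_\mathfrak{m}/aR_\mathfrak{m}$ has a unique prime, so its finitely generated proper ideal $I_\mathfrak{m}/aR_\mathfrak{m}$ is nilpotent and therefore has nonzero annihilator. This is a clean, noetherian-free version of the classical grade argument; it is more elementary and self-contained than the paper's route, which leans on the torsion-pair machinery of \cite[Proposition 2.11]{HS}, at the modest cost of being a slightly longer computation. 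You also make explicit the bookkeeping the paper leaves implicit (why $(0)\notin X_0$, why emptiness of the higher $X_i$ suffices, and why $(X_0)$ is again a characteristic sequence), all of which is handled correctly.
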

	\begin{proof}
			With respect to Theorem~\ref{T:HS}, it is enough to show that $\ext_R^1(R/I,R) \neq 0$ for any non-zero finitely generated ideal $I$ (in the noetherian setting, this follows directly from the classical grade theory, see e.g. \cite[Proposition 1.2.14]{BH}). Applying $\Hom_R(R/I,-)$ to the exact sequence $0 \rightarrow R \rightarrow Q \rightarrow Q/R \rightarrow 0$ yields $\ext_R^1(R/I,R) \simeq \Hom_R(R/I,Q/R)$. Since $R/I$ is finitely presented, the vanishing of the latter Hom group is equivalent to $\Hom_{R_\mathfrak{m}}(R_\mathfrak{m}/I_\mathfrak{m},Q/R_\mathfrak{m})=0$ for all $\mathfrak{m} \in \mSpec(R)$. Suppose that $\mathfrak{m} \in V(I)$. Because $I_\mathfrak{m}$ is a proper non-zero ideal of $R_\mathfrak{m}$, this implies that $Q/R_\mathfrak{m}$ belongs to a torsion-free class $\mathcal{F}$ of a hereditary torsion pair $(\mathcal{T},\mathcal{F})$ in $\operatorname{Mod-}R_\mathfrak{m}$, where $\mathcal{F}$ is neither $\{0\}$ nor the whole $\operatorname{Mod-}R_\mathfrak{m}$. But by the locality and 1-dimensionality of $R_\mathfrak{m}$, the only possibility is that $\mathcal{F}$ is the class of classical torsion-free modules, and therefore $Q/R_\mathfrak{m}$ is torsion-free (for details on the relevant torsion pair theory aspects, see e.g. \cite[Proposition 2.11]{HS}). As the inclusion $R_\mathfrak{m} \subseteq Q$ is essential, this implies $R_\mathfrak{m}=Q$ for any $\mathfrak{m} \in V(I)$. This can happen only in the situation $Q=R$, and then the whole claim is trivially valid.
	\end{proof}
	\begin{definition}
			Let $R$ be a Matlis domain with quotient field $Q$. Denote by $\pi: Q \rightarrow Q/R$ the canonical projection, and let $A$ be any direct summand of $Q/R$. Denote by $\pi^{-1}[A]$ the full preimage of $A$ under $\pi$. Then the module $T_A=A \oplus \pi^{-1}[A]$ is the \emph{Bass tilting module} associated to $A$.
	\end{definition}
	We quickly check that $T_A$ is indeed a tilting module. Conditions (T1) and (T3) follows directly from the definition; let us check the condition (T2), that is, $\ext_R^{1}(T_A,T_A^{(\varkappa)})=0$ for any cardinal $\varkappa$. Recall, denoting by $\mathcal{P}_1$ the class of all $R$-modules of projective dimension $\leq 1$, that $(\mathcal{P}_1,\mathcal{D})$ is a cotorsion pair (see \cite[Proposition 6.3]{BH2}). Since $\pd T_A \leq 1$, and $A$ is divisible, we can already infer that $\ext_R^1(T_A,A^{(\varkappa)})=0$. Denote $S=\pi^{-1}[A]$. It remains to check that $\ext_R^1(T_A,S^{(\varkappa)})=0$. Consider the exact sequence
	$$0 \rightarrow S \rightarrow Q \rightarrow Q/S \rightarrow 0.$$
	Applying $\Hom_R(A,-)$ yields an isomorphism $\Hom_R(A,Q/S) \simeq \ext_R^1(A,S)$, because $A$ is a torsion module of projective dimension $\leq 1$. Note that since $S/R \simeq A$, then $Q/S \simeq B$, where $Q/R = A \oplus B$. It follows from \cite[Lemma 4.2(b),(c)]{NN}, that either $A_\mathfrak{m}=Q/R_\mathfrak{m}$ or $B_\mathfrak{m}=Q/R_\mathfrak{m}$ for any $\mathfrak{m} \in \mSpec(R)$. Therefore, for any $\mathfrak{m} \in \mSpec(R)$, either $A_\mathfrak{m} = 0$ or $B_\mathfrak{m}=0$. Consequently, $\Hom_R(A,B)=0$, as $\operatorname{Im}(f) \otimes_R R_\mathfrak{m} = \operatorname{Im}(f \otimes_R R_\mathfrak{m})=0$ for any $f\in \Hom_R(A,B)$, and any $\mathfrak{m} \in \Spec(R)$. Therefore, $0 = \Hom_R(A,B^{(\varkappa)}) \simeq \ext_R^1(A,S^{(\varkappa)})$. Using the exact sequence
	$$0 \rightarrow R \rightarrow S \rightarrow A \rightarrow 0$$
	it is clear that also $\ext_R^1(S,S^{(\varkappa)})=0$, and thus we can finally conclude that $\ext_R^1(T_A,T_A^{(\varkappa)})=0$.
	\begin{prop}
		Let $R$ be a Matlis domain. Then any tilting module is equivalent to a Bass tilting module if and only if $R$ is $h$-local and of Krull dimension at most 1.
	\end{prop}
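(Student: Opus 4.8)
The plan is to prove both implications by reducing everything to $1$-tilting classes and then analysing these via localization at maximal ideals, using Theorem~\ref{T:HS} together with Proposition~\ref{PP01}.

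For the implication $(\Leftarrow)$, suppose $R$ is $h$-local with $\dim R\le 1$. By Lemma~\ref{PL89} every tilting class is $1$-tilting, so by the Remark following Theorem~\ref{T:HS} together with Proposition~\ref{PP01} (the bound $\dim R\le 1$ identifies the Thomason subsets of $\Spec R$ not containing the generic point with the arbitrary subsets of $\mSpec R$) the tilting classes are precisely the classes
$$\mathcal T_Y=\{M\in\ModR\mid \tor_1^R(R/I,M)=0 ~\forall\, V(I)\subseteq Y\}$$
indexed by subsets $Y\subseteq\mSpec R$. It then remains to realize each $\mathcal T_Y$ by a Bass tilting module. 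Since $R$ is $h$-local, $Q/R$ admits the Matlis decomposition $Q/R=\bigoplus_{\mathfrak m\in\mSpec R}E_\mathfrak m$ with $E_\mathfrak m\cong(Q/R)_\mathfrak m\cong Q/R_\mathfrak m$ (part of the structure theory of $h$-local domains in \cite{NN}), so $A_Y:=\bigoplus_{\mathfrak m\in Y}E_\mathfrak m$ is a direct summand of $Q/R$ and $T_{A_Y}$ is a Bass tilting module. I would then show $T_{A_Y}^{\perp_1}=\mathcal T_Y$. First note $T_{A_Y}^{\perp_1}=A_Y^{\perp_1}$ (immediate from $0\to R\to\pi^{-1}[A_Y]\to A_Y\to 0$, which gives $A_Y^{\perp_1}\subseteq(\pi^{-1}[A_Y])^{\perp_1}$). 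Both sides are $1$-tilting, hence of finite type by Theorem~\ref{T:finitetype}, so each is determined by its localizations at maximal ideals, and since localizing a tilting module gives a tilting module inducing the localized tilting class, it suffices to compare $(T_{A_Y})_\mathfrak m$ with the $\mathfrak m$-local form of $\mathcal T_Y$. Using that $E_{\mathfrak m'}\otimes_RR_\mathfrak m=0$ for $\mathfrak m'\ne\mathfrak m$ (because $R_{\mathfrak m'}\otimes_RR_\mathfrak m=Q$, as in the proof of Proposition~\ref{PP01}) and localizing $0\to R\to\pi^{-1}[A_Y]\to A_Y\to 0$, one finds $(T_{A_Y})_\mathfrak m\cong Q\oplus Q/R_\mathfrak m$ if $\mathfrak m\in Y$ and $(T_{A_Y})_\mathfrak m\cong R_\mathfrak m$ if $\mathfrak m\notin Y$. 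Over the $1$-dimensional local domain $R_\mathfrak m$ the first is the Bass tilting module with tilting class $\mathcal D(R_\mathfrak m)$, the class of all divisible $R_\mathfrak m$-modules, and the second is the trivial tilting module with class $\operatorname{Mod-}R_\mathfrak m$; on the other hand $\mathcal T_Y$ localizes at $\mathfrak m$ to $\operatorname{Mod-}R_\mathfrak m$ when $\mathfrak m\notin Y$ (every $I$ with $V(I)\subseteq Y$ has $I_\mathfrak m=R_\mathfrak m$) and to the tilting class of the Thomason set $\{\mathfrak mR_\mathfrak m\}$ when $\mathfrak m\in Y$, which over a $1$-dimensional local domain equals $\mathcal D(R_\mathfrak m)$. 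Hence the localizations agree and $T_{A_Y}^{\perp_1}=\mathcal T_Y$, so every tilting module is equivalent to a Bass tilting module.

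For the implication $(\Rightarrow)$, suppose every tilting module is equivalent to a Bass tilting module. Since $\pd T_A\le 1$ for every Bass tilting module, every tilting class is $1$-tilting, hence $\mathcal T_X$ for a Thomason set $X\subsetneq\Spec R$. The central observation is the ``local rigidity'' of Bass tilting classes: since $T_A^{\perp_1}=A^{\perp_1}$, for every maximal ideal $\mathfrak m$ the module $(T_A)_\mathfrak m$ is assembled from copies of $Q$ and of $Q/R_\mathfrak m$, and the same local computation as above shows its tilting class over $R_\mathfrak m$ is either $\operatorname{Mod-}R_\mathfrak m$ or $\mathcal D(R_\mathfrak m)$ whenever $R_\mathfrak m$ has no $1$-tilting class strictly between these. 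I would then argue in two steps. First, if some $R_\mathfrak m$ had Krull dimension $\ge 2$ it would carry a $1$-tilting class strictly between $\operatorname{Mod-}R_\mathfrak m$ and $\mathcal D(R_\mathfrak m)$ — for instance divisibility by an ideal whose radical is a height-one prime — which lifts to a global tilting class $\mathcal T_X$ that cannot equal any $A^{\perp_1}$ by the local description, a contradiction; thus $\dim R\le 1$. Second, granting $\dim R\le 1$, if $R$ were not $h$-local then by Proposition~\ref{PP01} some subset of $\mSpec R$ would fail to be Thomason, reflecting (through the structure theory of \cite{NN}) a failure of $Q/R$ to split off the corresponding summand; one then exhibits a Thomason set $Y'\subseteq\mSpec R$ — for instance $Y'=V(sR)$ for an element $s$ witnessing the failure of finite character or of the unique-maximal-over property — whose only candidate Bass realization would require the submodule $R_s/R\hookrightarrow Q/R$ to split off, which it does not. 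Hence $R$ is $h$-local of Krull dimension at most $1$.

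The routine part is the localization bookkeeping in the $(\Leftarrow)$ direction. The genuine obstacle is the $(\Rightarrow)$ direction, namely turning the local rigidity of Bass tilting classes into the sharp conclusions $\dim R\le 1$ and $h$-locality. The delicate issue is that the failure of either property does not merely shrink the supply of Thomason sets; rather one must single out a concrete tilting class whose only possible Bass realization would force a direct summand of $Q/R$ that, by the structure theory of $h$-local and Matlis domains, does not exist.
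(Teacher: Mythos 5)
Your $(\Leftarrow)$ direction is sound and is essentially the paper's argument with the localization bookkeeping written out; the first half of $(\Rightarrow)$ (Krull dimension $\le 1$) also matches the paper in substance, except that your witness ``divisibility by an ideal whose radical is a height-one prime'' need not exist over a non-noetherian Matlis domain --- the paper instead uses the class of $S$-divisible modules for $S=R\setminus\mathfrak p$ with $\mathfrak p$ a non-zero non-maximal prime, which always exists and localizes to a class strictly between $\operatorname{Mod-}R_\mathfrak m$ and $\mathcal{D}(R_\mathfrak m)$; one then only needs that $Q/R_\mathfrak m$ is indecomposable over the local domain $R_\mathfrak m$ (via \cite[Lemma 4.2]{NN}) to see that there are exactly two Bass tilting classes locally.

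The genuine gap is the $h$-locality half of $(\Rightarrow)$, which you leave as ``one then exhibits a Thomason set $Y'$ whose only candidate Bass realization fails.'' This step is not carried out, and the candidate you name does not obviously work: if $T_A=A\oplus\pi^{-1}[A]$ generates $\mathcal{D}_s$, then $s$-divisibility of $\pi^{-1}[A]$ only forces $R_s/R\subseteq A$ for \emph{some} direct summand $A$ of $Q/R$; it does not force $R_s/R$ itself to split off, so no contradiction with non-$h$-locality has been produced. The paper's mechanism is the reverse of yours: for each $\mathfrak m$ the cofinite set $[\mathfrak m]=\mSpec(R)\setminus\{\mathfrak m\}$ is Thomason (using only $\dim R\le 1$, already established); the hypothesis provides a Bass tilting module $T_A$ for the corresponding class, and \cite[Lemma 4.2]{NN} forces $A\simeq R_\mathfrak m/R$ together with a decomposition $Q/R\simeq R_\mathfrak m/R\oplus R_{[\mathfrak m]}/R$; the complementary Bass tilting module $T_B$ with $B=R_{[\mathfrak m]}/R$ then has Thomason set exactly $\{\mathfrak m\}$. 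Hence every singleton, and therefore every subset, of $\mSpec(R)$ is Thomason, and Proposition~\ref{PP01} yields $h$-locality. In other words, one does not hunt for a Thomason set that cannot be realized; one uses the realizations that do exist to manufacture new Thomason sets until Proposition~\ref{PP01} applies. Without this (or an equally concrete substitute) your second step remains a statement of intent rather than a proof.
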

	\begin{proof}
			Let $R$ be a 1-dimensional $h$-local domain. By Lemma~\ref{PL89}, any tilting class is 1-tilting. Let $\mathcal{T}$ be a 1-tilting class, and let $X \subseteq \mSpec(R)$ be the Thomason set corresponding to $\mathcal{T}$ in the sense of Theorem~\ref{T:HS}. Explicitly, with respect to Proposition~\ref{PP01}, there is a finitely generated ideal $I_\mathfrak{m}$ such that $V(I_\mathfrak{m})= \{\mathfrak{m}\}$, and $\mathcal{T}=\{M \in \ModR \mid M=I_\mathfrak{m}M \: \forall \mathfrak{m} \in X\}$. Since $R$ is $h$-local, there is by \cite[Theorem 3.7]{NN} a natural isomorphism $Q/R \simeq \bigoplus_{\mathfrak{m} \in \mSpec(R)}Q/R_\mathfrak{m} \simeq \bigoplus_{\mathfrak{m} \in \mSpec(R)}R_{[\mathfrak{m}]}/R$. We let $A_X=\bigoplus_{\mathfrak{m} \in X}Q/R_\mathfrak{m}$ and let $T_X=A_X \oplus \pi^{-1}[A_X]$ be the Bass tilting module corresponding to $A_X$. It is easy to see that $T_X$ is divisible by a finitely generated ideal $I$ if and only if $V(I) \subseteq X$ (as $\pi^{-1}[A_X]=R_{\mSpec(R) \setminus X}$). Since $T_X$ is a 1-tilting module, it has to generate the tilting class $\mathcal{T}$.

			Suppose, on the other hand, that any tilting class in $\ModR$ is generated by a Bass tilting module. First, let us show that the Krull dimension of $R$ is at most 1. Indeed, if $\mathfrak{p}$ is a non-zero and non-maximal prime ideal, consider the tilting class $\mathcal{T}_S$ of all $S$-divisible modules, where $S=R \setminus \mathfrak{p}$. Suppose $\mathcal{T}_S$ is generated by a Bass tilting module $T$. Localizing at any maximal ideal $\mathfrak{m}$ containing $\mathfrak{p}$, we can assume without loss of generality that $R$ is local --- clearly a localization of a Bass tilting module at $\mathfrak{m}$ is a Bass tilting module over $R_\mathfrak{m}$, and by \cite[Proposition 13.50]{GT}, the tilting class generated by $T_\mathfrak{m}$ consists of all $S_\mathfrak{m}=(R_\mathfrak{m} \setminus \mathfrak{p}_\mathfrak{m})$-divisible modules. Since $R$ is now a assumed to be local, $Q/R$ is indecomposable, and thus there are only two Bass tilting modules over $R$ --- the trivial one generating $\ModR$, and then $Q \oplus Q/R$ generating $\mathcal{D}$. As none of these generates $\mathcal{T}_S$, we established a contradiction, and thus $R$ is indeed 1-dimensional.

			Finally we prove that $R$ is $h$-local. Given any maximal ideal $\mathfrak{m}$, the cofinite subset $[\mathfrak{m}]$ of $\mSpec(R)$ is easily seen to be Thomason. Therefore, there is a tilting class $\mathcal{T}$ corresponding to $[\mathfrak{m}]$ as in the first part of this proof. By the hypothesis, $\mathcal{T}$ is generated by a Bass tilting module $T_A=A \oplus \pi^{-1}[A]$ for some direct summand $A$ of $Q/R$. By \cite[\S IV, Lemma 4.2b]{NN}, necessarily $A \simeq R_\mathfrak{m}/R$ and $Q/R$ is naturally isomorphic to $R_\mathfrak{m}/R \oplus R_{[\mathfrak{m}]}/R$. Then the complement $B=R_{[\mathfrak{m}]}/R$ is non-zero, and if $T_B=B \oplus \pi^{-1}[B]$ is the Bass tilting module corresponding to $B$, the tilting class is not equal to $\ModR$. Furthermore, any ideal $I$ such that $T_B=IT_B$ is not contained in any maximal ideal other than $\mathfrak{m}$. Then the Thomason set corresponding to $T_B$ needs to be the singleton $\{\mathfrak{m}\}$. We proved that any subset of $\mSpec(R)$ is Thomason, and therefore $R$ is $h$-local by Proposition~\ref{PP01}.
	\end{proof}
	\begin{lem}
			\label{PL03}
			Let $R$ be an integral domain. If $\mathcal{D}$ is closed under flat covers, then $R$ is almost perfect.
	\end{lem}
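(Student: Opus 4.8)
The plan is to feed the two lemmas already proved into the homological characterization of almost perfect domains supplied by Theorem~\ref{T:apd}.

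First, since $\mathcal{D}$ is closed under flat covers, Lemma~\ref{PL02} gives at once that $R$ is a Matlis domain, i.e.\ $\pd Q\le 1$. Knowing this, I would apply the contrapositive of Lemma~\ref{PL01}: with $R$ Matlis, the closure of $\mathcal{D}$ under flat covers forbids a flat $R$-module of projective dimension $>1$, so every flat $R$-module lies in $\mathcal{P}_1$. Concretely, by Corollary~\ref{PC00} every flat $F$ has a flat $\mathcal{D}$-preenvelope, so by Lemma~\ref{PL20} the canonical map $F\to F\otimes_R Q$ is a $\mathcal{D}$-envelope of $F$, hence a special $\mathcal{D}$-preenvelope by Wakamatsu; thus $\Coker(F\to F\otimes_R Q)\cong F\otimes_R(Q/R)$ has projective dimension $\le 1$, and since $F\otimes_R Q$ is a direct sum of copies of $Q$, the exact sequence $0\to F\to F\otimes_R Q\to F\otimes_R(Q/R)\to 0$ forces $\pd F\le 1$.

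It then remains to deduce that $R$ is almost perfect from the fact that every flat module lies in $\mathcal{P}_1$. By Theorem~\ref{T:apd} it is enough to verify condition (iii): every $R$-module $M$ of weak dimension $\le 1$ has projective dimension $\le 1$. Choosing a short exact sequence $0\to K\to P\to M\to 0$ with $P$ free, the bound on the weak dimension of $M$ makes $K$ flat, whence $\pd K\le 1$ by the step above and therefore $\pd M\le 2$. The substance is to push this down to $\pd M\le 1$ --- equivalently, to show that a flat submodule of a free module is automatically projective as soon as every flat module has projective dimension $\le 1$.

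I expect this last improvement to be the main obstacle, and it is where genuine structure theory is needed: the soft cotorsion-pair manipulations above only ever deliver the bound $2$. The cleanest way to close it is to invoke the relevant part of Salce's theory of almost perfect domains, which together with Theorem~\ref{T:apd} shows that a domain over which every flat module has projective dimension $\le 1$ is necessarily almost perfect; the heart of that implication is the construction --- starting from a non-perfect proper quotient $R/rR$ (which exists whenever $R$ fails to be almost perfect) and lifting the ensuing infinite descending chain of principal ideals --- of a flat $R$-module of projective dimension $>1$, precisely what the previous step rules out. An alternative packaging is to note that the property ``$R$ is Matlis and every flat module lies in $\mathcal{P}_1$'' passes to every localization $R_\mathfrak{p}$, reduce to the local case, and reassemble the global statement through $h$-locality using Theorem~\ref{T:apd}(ii); there the delicate point is verifying finite character.
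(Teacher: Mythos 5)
Your first paragraph reproduces the paper's opening move: Lemmas~\ref{PL02} and \ref{PL01} give that $R$ is Matlis and that every flat module has projective dimension at most $1$. The problem is the second half, and you have correctly located it: the syzygy argument only yields $\pd M\le 2$ for a module $M$ of weak dimension $\le 1$, while condition (iii) of Theorem~\ref{T:apd} demands $\pd M\le 1$. Neither of your proposed fixes closes this gap. The characterization ``every flat module has $\pd\le 1$ implies almost perfect'' is not among the equivalences quoted in Theorem~\ref{T:apd}, you do not prove it, and the sketch you offer for it does not work: lifting a Bass module of a non-perfect quotient $R/rR$ produces a \emph{countably presented} flat $R$-module, which automatically has projective dimension $\le 1$; the module of projective dimension $2$ that this construction actually yields is the torsion module $F/rF$, which has weak dimension $\le 1$ but is \emph{not} flat, so it witnesses the failure of condition (iii) without contradicting ``flat $\Rightarrow\pd\le 1$''. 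This is exactly why the two conditions are not interchangeable, and why outsourcing the step to ``Salce's theory'' in this form begs the question. The localization alternative is likewise only gestured at.

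The paper closes the gap by going back to the hypothesis rather than trying to upgrade its consequence. Given $M$ with a flat presentation $0\to F_1\to F_0\to M\to 0$, closure under flat covers applies to \emph{both} $F_0$ and $F_1$: by Lemmas~\ref{PL00} and \ref{PL20} each map $F_i\to F_i\otimes_R Q$ is a $\mathcal{D}$-envelope, hence a special $\mathcal{D}$-preenvelope by Wakamatsu, so each cokernel $(F_i\otimes_R Q)/F_i$ lies in ${}^\perp\mathcal{D}=\mathcal{P}_1$. Setting $Z=\Coker(F_1\to F_0\otimes_R Q)$, the sequence $0\to (F_1\otimes_R Q)/F_1\to Z\to M\otimes_R Q\to 0$ gives $\pd Z\le 1$ (the right-hand term is a direct sum of copies of $Q$ and $R$ is Matlis), and then $0\to M\to Z\to (F_0\otimes_R Q)/F_0\to 0$ forces $\pd M\le 1$. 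This second use of the flat-cover hypothesis, applied to the terms of the flat resolution rather than to $M$ itself, is the missing idea; without it your argument stalls at $\pd M\le 2$.
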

	\begin{proof}
		By combining Lemmas~\ref{PL01} and \ref{PL02}, we know that the hypothesis implies that all flat $R$-modules have projective dimension at most 1. Let $M$ be of flat dimension 1, that is, we have an exact sequence 
		$$0 \rightarrow F_1 \rightarrow F_0 \rightarrow M \rightarrow 0$$ 
		with $F_0,F_1$ flat. Let $Z$ denote the cokernel of the composition $F_1 \rightarrow F_0 \rightarrow F_0 \otimes_R Q$. Consider the exact sequence
		$$0 \rightarrow (F_1 \otimes_R Q)/F_1 \rightarrow Z \rightarrow M \otimes_R Q \rightarrow 0.$$
			The leftmost term has projective dimension $\leq 1$, because it is the cokernel of a special $\mathcal{D}$-preenvelope (Lemma~\ref{PL20}), while the rightmost term has projective dimension $\leq 1$ because $R$ is Matlis, and $M \otimes_R Q$ is a direct sum of copies of $Q$. Hence, $\pd Z \leq 1$. Now, let us focus our attention on the exact sequence
		$$0 \rightarrow M \rightarrow Z \rightarrow (F_0 \otimes_R Q)/F_0 \rightarrow 0,$$
		induced by the map $M \simeq F_0/F_1 \subseteq Z$.	
			We already know that $\pd Z \leq 1$. The rightmost term of the sequence is again the cokernel of a special $\mathcal{D}$-preenvelope, whence it is of projective dimension at most 1 too. Therefore, $\pd M \leq 1$ as desired. We proved that $R$-modules of weak dimension $\leq 1$ coincide with the $R$-modules of projective dimension $\leq 1$, proving that $R$ is almost perfect by Theorem~\ref{T:apd}.
	\end{proof}
	In the final step, we prove that all tilting classes over an almost perfect domain are closed under flat covers. Before that, we recall some required basics of the fractional ideal theory. Given an ideal $I$, let $I^{-1} = \{r \in Q \mid rI \subseteq R\}$. Ideal $I$ is called \emph{invertible} if $II^{-1}=R$. For any $n > 0$, we set $I^{-n}=(I^n)^{-1}$ and $I^0=R$.
	\begin{lem}\label{PL19}
		Let $R$ be an integral domain and $I$ a non-zero ideal. Then:
			\begin{enumerate}
				\item[(i)] $I$ is projective if and only if it is invertible, and in this case, $I$ is finitely generated.
				\item[(ii)] If $I$ is invertible, the quotient of fractional ideals $I^{-(n+1)}/I^{-n}$ is a projective generator in $\mbox{Mod-$R/I$}$ for any $n \geq 0$.
				\item[(iii)] If $I$ is invertible, then $\ext_R^1(P,D)=0$ for any projective $R/I$-module $P$ and any $D$ such that $D=ID$.
			\end{enumerate}
	\end{lem}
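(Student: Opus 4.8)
The plan is to handle the three items in turn, leaning on the dual-basis lemma and on the standard identification $\Hom_R(I,R)\cong I^{-1}$, valid for any nonzero ideal $I$ in a domain (send $f$ to $f(x)/x$ for any $0\neq x\in I$, which is independent of $x$ by $R$-linearity of $f$, with inverse $q\mapsto(x\mapsto qx)$).

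For (i), if $I$ is invertible I would fix a relation $1=\sum_{i=1}^{k}a_ib_i$ with $a_i\in I$ and $b_i\in I^{-1}$: the maps $f_i\colon x\mapsto b_ix$ land in $R$ and satisfy $\sum_i f_i(x)a_i=x$ for all $x\in I$, so $\{a_i,f_i\}$ is a dual basis, giving projectivity of $I$; and since $x=\sum_i(b_ix)a_i$ with $b_ix\in R$ we also get $I=(a_1,\dots,a_k)$, so $I$ is finitely generated. Conversely, given a (possibly infinite) dual basis $\{a_i,f_i\}_{i\in\Lambda}$ of a projective $I$, I would fix $0\neq x_0\in I$, note that only finitely many $f_i(x_0)$ are nonzero, put $q_i=f_i(x_0)/x_0$, and check that $q_i\in I^{-1}$ and $\sum_i q_ia_i=1$, so that $I^{-1}I=R$. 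This part is routine.

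For (ii), assume $I$ is invertible; first I would record that $I^{-m}=(I^{-1})^m$ and, using $II^{-1}=R$, that $I^{-n}\subseteq I^{-(n+1)}$ and $I\cdot I^{-(n+1)}=I^{-n}$. Hence $I^{-(n+1)}/I^{-n}\cong I^{-(n+1)}\otimes_R R/I$ is genuinely a module over $R/I$, and since $I^{-(n+1)}$ is invertible it is a finitely generated projective $R$-module by (i), so its base change to $R/I$ is finitely generated projective over $R/I$. For the generator property I would take $1=\sum_{j=1}^{m}c_je_j$ with $c_j\in I^{n+1}$ and $e_j\in I^{-(n+1)}$; the $R$-linear surjection $(I^{-(n+1)})^{m}\to R$, $(y_j)\mapsto\sum_j c_jy_j$, base-changes to a surjection $(I^{-(n+1)}/I^{-n})^{m}\to R/I$, which splits because $R/I$ is projective over itself. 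Thus $R/I$ is a direct summand of a finite power of $I^{-(n+1)}/I^{-n}$, i.e.\ the latter is a projective generator over $R/I$.

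For (iii), since $I$ is nonzero and invertible, $0\to I\to R\to R/I\to 0$ is a projective resolution of $R/I$, so applying $\Hom_R(-,D)$ gives $\ext_R^1(R/I,D)\cong\Coker\bigl(\Hom_R(R,D)\to\Hom_R(I,D)\bigr)$. As $I$ is finitely generated projective and $\Hom_R(I,R)\cong I^{-1}$, this cokernel is naturally $(I^{-1}/R)\otimes_R D$ (the restriction map becomes the map induced by the inclusion $R\hookrightarrow I^{-1}$, and one uses right-exactness of $\otimes$); since $II^{-1}\subseteq R$ the ideal $I$ annihilates $I^{-1}/R$, so $I^{-1}/R$ is an $R/I$-module and $(I^{-1}/R)\otimes_R D\cong(I^{-1}/R)\otimes_{R/I}(D/ID)=0$ because $D=ID$. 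Finally, for a general projective $R/I$-module $P$ I would write $P$ as a direct summand of a free $R/I$-module and use that $\ext_R^1$ sends coproducts in the first variable to products. I do not expect a serious obstacle; the only points needing care are the two natural identifications in this last paragraph. Alternatively, (iii) can be done by localization: $\ext_R^1(R/I,-)$ commutes with localization since $R/I$ is finitely presented, $I_{\mathfrak p}=sR_{\mathfrak p}$ for some nonzero (hence regular) $s$, and $\ext_{R_{\mathfrak p}}^1(R_{\mathfrak p}/sR_{\mathfrak p},D_{\mathfrak p})\cong D_{\mathfrak p}/sD_{\mathfrak p}=0$ because $D_{\mathfrak p}=I_{\mathfrak p}D_{\mathfrak p}=sD_{\mathfrak p}$.
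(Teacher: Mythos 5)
Your proof is correct, and for parts (ii) and (iii) it takes a more self-contained route than the paper. For (i) the paper simply cites Passman; your dual-basis argument is exactly what that citation contains. For (ii) both proofs rest on the same identification $I^{-(n+1)}\otimes_R R/I\simeq I^{-(n+1)}/I^{-n}$, but for the generator property the paper invokes the fact that over a domain a nonzero projective (fractional) ideal is a generator because its trace ideal is pure, whereas you build an explicit split surjection $(I^{-(n+1)}/I^{-n})^m\twoheadrightarrow R/I$ from the invertibility relation $1=\sum_j c_je_j$ --- more elementary and arguably cleaner. The real divergence is in (iii): the paper identifies $I^{-1}/R$ as the Auslander--Bridger transpose of $R/I$, cites \cite[Theorem 3.16]{Hrb} for $(I^{-1}/R)^{\perp_1}=\{D\mid D=ID\}$, and then uses (ii) to get $\Add(R/I)=\Add(I^{-1}/R)$; you instead compute directly that $\ext_R^1(R/I,D)\simeq(I^{-1}/R)\otimes_R D=0$ from the two-term resolution $0\to I\to R\to R/I\to 0$ (or localize to reduce to a principal ideal), and then pass to arbitrary projective $R/I$-modules via coproducts and summands. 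Your version proves by hand precisely the special case of the transpose formula that is needed, which buys independence from \cite{Hrb} at the cost of the two natural identifications you flag (both of which do hold since $I$ is finitely generated projective). One pedantic point on the localization variant: for $\ext_R^1(R/I,-)$ to commute with localization you need not just that $R/I$ is finitely presented but that its first syzygy $I$ is as well; this is automatic here because $I$ is finitely generated projective, but it is worth saying.
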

	\begin{proof}
			\begin{enumerate}
				\item[(i)] See \cite[Proposition 7.2 and Lemma 7.1]{Pass}.
				\item[(ii)] Consider the tensor product $I^{-(n+1)} \otimes_R R/I$. By right exactness of tensoring, we have natural isomorphisms
						$$I^{-(n+1)} \otimes_R R/I \simeq (I^{-(n+1)} \otimes_R R)/(I^{-(n+1)} \otimes_R I) \simeq I^{-(n+1)}R/I^{-(n+1)}I.$$
							If $I$ is invertible, then $I^n$ is invertible too, and $I^{-(n+1)}=(I^{-1})^{n+1}$. This shows that $I^{-(n+1)} \otimes_R R/I \simeq I^{-(n+1)}/I^{-n}$. Therefore, $I^{-(n+1)}/I^{-n}$ is an $R/I$-module, and since $I^{-(n+1)}$ is a projective $R$-module, $I^{-(n+1)}/I^{-n}$ is a projective $R/I$-module. Since $R$ is a domain, $I^{-(n+1)}$ is a projective generator (as the trace ideal is a pure ideal of $R$), and thus $I^{-(n+1)} \otimes_R R/I$ is also a projective generator in $\mbox{Mod-$R/I$}$.
					\item[(iii)] Because $I$ is projective by $(i)$, we have that $I \rightarrow R \rightarrow R/I \rightarrow 0$ is a projective presentation of $R/I$. Therefore, $I^{-1}/R$ is an Auslander-Bridger transpose of $R/I$, and thus $(I^{-1}/R)^{\perp_1}=\{D \in \ModR \mid D=ID\}$ (for details, see \cite[\S 3 \& Theorem 3.16]{Hrb}). By $(ii)$, $I^{-1}/R$ is a projective generator of $\mbox{Mod-$R/I$}$, thus $\Add(R/I)=\Add(I^{-1}/R)$, and therefore $\ext_R^1(P,D)=0$ for any $P \in \Add(R/I)$ and any $D \in \ModR$ such that $D=ID$.
			\end{enumerate}
	\end{proof}
	\begin{lem}\label{L:fre}
		Let $R$ be an integral domain and $I$ an invertible ideal. Denote by $Q_I$ the directed union of fractional ideals $\bigcup_{n>0} I^{-n} \subseteq Q$. Then the inclusion map $\varphi: R \rightarrow Q_I$ is a flat ring epimorphism, and also a $\mathcal{D}_I$-envelope.

			Furthermore, suppose that $F$ is a flat module which admits a flat $\mathcal{D}_I$-preenvelope. Then the map $F \rightarrow F \otimes_R Q_I$ is a $\mathcal{D}_I$-envelope.
	\end{lem}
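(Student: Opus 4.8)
The plan is to verify the three assertions about $\varphi$ --- flatness, the ring epimorphism property, and the $\mathcal{D}_I$-envelope property (recall $\mathcal{D}_I=\{D\in\ModR\mid D=ID\}$) --- and then to obtain the final statement by applying $F\otimes_R-$ to this picture. Flatness is immediate: since $I$ is invertible so is every power $I^n$, hence each $I^{-n}=(I^{-1})^n$ is a finitely generated projective, in particular flat, $R$-module by Lemma~\ref{PL19}(i), and therefore the directed union $Q_I=\bigcup_{n>0}I^{-n}$ is flat. To see that $\varphi$ is a ring epimorphism I would check that the multiplication map $Q_I\otimes_R Q_I\to Q_I$ is an isomorphism: as $I^{-n}$ is flat and $I^{-n}\otimes_R Q\cong Q$, tensoring the inclusion $I^{-m}\subseteq Q$ with $I^{-n}$ identifies $I^{-n}\otimes_R I^{-m}$ with the product $I^{-n}I^{-m}=I^{-(n+m)}$; passing to the directed colimit over $n$ and $m$ gives $Q_I\otimes_R Q_I\cong Q_I$, the comparison map being multiplication.

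For the envelope property, first $IQ_I=\bigcup_{n>0}I\cdot I^{-n}=\bigcup_{n\ge 0}I^{-n}=Q_I$, so $Q_I\in\mathcal{D}_I$. Next I claim $\Coker(\varphi)=Q_I/R$ lies in ${}^\perp\mathcal{D}_I$. Consider the continuous filtration $0=I^0/R\subseteq I^{-1}/R\subseteq I^{-2}/R\subseteq\cdots$ of $Q_I/R$; since $I\cdot I^{-(n+1)}=I^{-n}$, its consecutive quotients $I^{-(n+1)}/I^{-n}$ are $R/I$-modules, in fact projective $R/I$-modules by Lemma~\ref{PL19}(ii), so by Lemma~\ref{PL19}(iii) each of them lies in ${}^\perp\mathcal{D}_I$. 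Since ${}^\perp\mathcal{D}_I$ is closed under transfinite extensions (Eklof's lemma \cite{GT}; here it is even elementary, the relevant inverse system of $\Hom$-groups having surjective transition maps), we conclude $Q_I/R\in{}^\perp\mathcal{D}_I$. Hence $0\to R\to Q_I\to Q_I/R\to 0$ is a special $\mathcal{D}_I$-preenvelope, so $\varphi$ is a $\mathcal{D}_I$-preenvelope; it is a $\mathcal{D}_I$-\emph{envelope} because it is left minimal. Indeed, $\varphi$ being a ring epimorphism, restriction of scalars from $Q_I$-modules to $R$-modules is fully faithful, so every $h\in\End_R(Q_I)$ is $Q_I$-linear, i.e.\ is multiplication by $h(1)\in Q_I$, and $h\varphi=\varphi$ forces $h(1)=1$, whence $h=\id$.

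For the last statement, let $f\colon F\to L$ be a flat $\mathcal{D}_I$-preenvelope, so $L$ is flat with $IL=L$ and thus $L/IL=0$. Running up the filtration of $Q_I/R$ above, all of whose subquotients are $R/I$-modules, gives $L\otimes_R(Q_I/R)=0$; as $\tor{}_1^R(L,Q_I/R)=0$ by flatness of $L$, tensoring $0\to R\to Q_I\to Q_I/R\to 0$ with $L$ shows the canonical map $L\to L\otimes_R Q_I$ to be an isomorphism. Applying naturality of $M\mapsto(M\to M\otimes_R Q_I)$ to $f$, we see $f$ factors through $\iota\colon F\to F\otimes_R Q_I$ (which is a monomorphism since $F$ is flat), and since every map from $F$ to a member of $\mathcal{D}_I$ factors through $f$, it also factors through $\iota$; moreover $F\otimes_R Q_I\in\mathcal{D}_I$, being a $Q_I$-module with $IQ_I=Q_I$, so $\iota$ is a $\mathcal{D}_I$-preenvelope. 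Finally $\iota$ is left minimal by the same ring-epimorphism argument: any $h\in\End_R(F\otimes_R Q_I)$ is automatically $Q_I$-linear, and $h\iota=\iota$ means $h$ fixes the $Q_I$-generating set $\{x\otimes 1\mid x\in F\}$, so $h=\id$. Thus $\iota$ is a $\mathcal{D}_I$-envelope.

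The step requiring the most care is showing $\Coker(\varphi)=Q_I/R\in{}^\perp\mathcal{D}_I$: the fractional-ideal arithmetic ($I\cdot I^{-(n+1)}=I^{-n}$, $I^{-n}I^{-m}=I^{-(n+m)}$, $I^{-n}\otimes_R I^{-m}\cong I^{-(n+m)}$) has to be arranged so that the consecutive quotients of the natural filtration of $Q_I/R$ come out as projective $R/I$-modules, after which Lemma~\ref{PL19}(ii),(iii) and a transfinite-extension argument close the gap. The remaining pieces --- flatness, the epimorphism check, and the two left-minimality arguments --- are routine given Lemma~\ref{PL19} and the formalism of flat ring epimorphisms.
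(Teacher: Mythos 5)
Your proof is correct and follows essentially the same route as the paper: the same filtration of $Q_I/R$ by the projective $R/I$-modules $I^{-(n+1)}/I^{-n}$ combined with Lemma~\ref{PL19} and Eklof's lemma to get a special $\mathcal{D}_I$-preenvelope, and the ring-epimorphism property of $R\to Q_I$ to get minimality. The only differences are cosmetic: you verify directly that $Q_I\otimes_R Q_I\to Q_I$ is an isomorphism and that $L\cong L\otimes_R Q_I$ (where the paper cites Stenstr\"om), and your minimality argument for $\iota$ uses $Q_I$-linearity plus fixing a $Q_I$-generating set instead of the paper's essentiality argument --- both are fine.
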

	\begin{proof}
			That $Q_I$ is a flat ring epimorphism follows from \cite[\S IX, Proposition 2.2 and 2.4]{SS}. Also, $Q_I \in \mathcal{D}_I$ by \cite[\S XI, Proposition 3.4]{SS}. The cokernel of $\varphi$ is isomorphic to a directed union $\bigcup_{n>0} I^{-n}/R$, which is in turn a module filtered by modules of form $I^{-(n+1)}/I^{-n}, n \geq 0$. These modules are projective $R/I$-modules by Lemma~\ref{PL19}, and thus $\Coker(\varphi)$ belongs to ${}^\perp\mathcal{D}_I$ by Lemma~\ref{PL19}(iii). Therefore, $\varphi$ is a special $\mathcal{D}_I$-preenvelope. That $\varphi$ is actually a $\mathcal{D}_I$-envelope follows from the fact that $\varphi: R \rightarrow Q_I$ is a ring epimorphism, and therefore any $R$-module map $f: Q_I \rightarrow Q_I$ fixing the unit element is an isomorphism.

			Now we prove the ``furthermore'' part. Let $f: F \rightarrow L$ be a $\mathcal{D}$-preenvelope such that $L$ is flat. Since $L$ is flat (and thus, in particular, torsion-free) and $I$-divisible, we infer from \cite[Proposition 3.4]{SS} that $L$ has a natural structure of a $Q_I$-module. Then tensoring $f$ with $Q_I$ yields a sequence of embeddings $F \subseteq F \otimes_R Q_I \subseteq L$, such that the composition of both inclusions is $f$. From this it is easy to infer that the embedding $\iota: F \rightarrow F \otimes_R Q_I$ is a $\mathcal{D}_I$-preenvelope. Let $g \in \End_R(F \otimes_R Q_I)$ be a map such that $g\iota=\iota$. Since $\iota$ is an essential embedding, $g$ is necessarily a monomorphism. Denote by $F'$ the image of $g$ in $F \otimes_R Q_I$. Then $F'$ is a $Q_I$-module and we have a chain of inclusions $F \subseteq F' \subseteq F \otimes_R Q_I$. Tensoring this by $Q_I$ yields $F' \otimes_R Q_I = F \otimes_R Q_I$, implying that $F'=F \otimes_R Q_I$. Therefore, $g$ is an automorphism, and so $\iota: F \rightarrow F \otimes_R Q_I$ is a $\mathcal{D}_I$-envelope, as claimed.
	\end{proof}
	\begin{prop}\label{PP18}
			Let $R$ be an integral domain and $\mathcal{I}$ a set of invertible ideals. If $R/I$ is a perfect ring for all $I \in \mathcal{I}$, then the class $\mathcal{D}_\mathcal{I}=\{M \in \ModR \mid M=IM\ ~\forall I \in \mathcal{I}\}$ of all $\mathcal{I}$-divisible modules is closed under flat covers.
	\end{prop}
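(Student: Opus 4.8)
The plan is to verify the criterion of Corollary~\ref{PC00}. First note that $\mathcal{D}_\mathcal{I}$ is a $1$-tilting class: by the computation of $(I^{-1}/R)^{\perp_1}$ carried out inside the proof of Lemma~\ref{PL19}(iii) we have $\mathcal{D}_\mathcal{I}=(\bigoplus_{I\in\mathcal{I}}I^{-1}/R)^{\perp_\infty}$, and each $I^{-1}/R$ is strongly finitely presented of projective dimension $\leq 1$ (use Lemma~\ref{PL19}(i) together with the presentation $0\to R\to I^{-1}\to I^{-1}/R\to 0$), so Theorem~\ref{T:finitetype} applies. Hence it is enough to show that every flat $R$-module admits a flat $\mathcal{D}_\mathcal{I}$-preenvelope. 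I would prove this first for a single ideal of $\mathcal{I}$ and then assemble the general case by a transfinite iteration over $\mathcal{I}$.

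For a single ideal $I\in\mathcal{I}$ (so $R/I$ is perfect) and a flat module $F$, start from the exact sequence $0\to R\to Q_I\to C\to 0$ of Lemma~\ref{L:fre}, where $C=\Coker(\varphi)$ is filtered by the modules $I^{-(n+1)}/I^{-n}$ and $Q_I$ is a flat $R$-module. Tensoring by the flat module $F$ preserves exactness and the filtration, giving
$$0\to F\to F\otimes_R Q_I\to F\otimes_R C\to 0,$$
in which $F\otimes_R Q_I$ is flat (a tensor product of flat $R$-modules is flat) and $I$-divisible (since $Q_I=IQ_I$), while $F\otimes_R C$ is filtered by the modules $F\otimes_R(I^{-(n+1)}/I^{-n})$. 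Now $I^{-(n+1)}/I^{-n}$ is a projective $R/I$-module by Lemma~\ref{PL19}(ii), hence a direct summand of some $(R/I)^{(\kappa)}$, so each filtration piece is a direct summand of $(F/IF)^{(\kappa)}$. This is exactly where perfectness is used: $F/IF=F\otimes_R R/I$ is a flat $R/I$-module and therefore \emph{projective}, since $R/I$ is perfect; so each piece is a projective $R/I$-module and lies in ${}^\perp\mathcal{D}_I$ by Lemma~\ref{PL19}(iii), and since ${}^\perp\mathcal{D}_I$ is closed under filtrations (Eklof's Lemma, \cite{GT}), $F\otimes_R C\in{}^\perp\mathcal{D}_I$. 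Thus the displayed sequence is a (special) $\mathcal{D}_I$-preenvelope of $F$ with flat codomain; denote it by $\iota_I:F\to F\otimes_R Q_I$.

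For the whole set, enumerate $\mathcal{I}=\{I_\alpha\mid\alpha<\lambda\}$ and, given flat $F$, build a continuous chain of flat modules $(F_\alpha)_{\alpha\leq\lambda}$ with $F_0=F$, $F_{\alpha+1}=F_\alpha\otimes_R Q_{I_\alpha}$ and structure map $\iota_{I_\alpha}$ at successors, and $F_\beta=\varinjlim_{\alpha<\beta}F_\alpha$ at limit ordinals. Flatness survives every step and every direct limit. Moreover, if $F_\alpha=JF_\alpha$ for an ideal $J$ then $F_{\alpha+1}=JF_{\alpha+1}$ (the epimorphism $J\otimes_R F_\alpha\twoheadrightarrow F_\alpha$ stays epic after applying $-\otimes_R Q_{I_\alpha}$), and divisibility also passes through direct limits; since $F_{\alpha+1}$ is $I_\alpha$-divisible by construction, an induction shows that $L:=F_\lambda$ is flat and divisible by every $I_\alpha$, i.e. $L\in\mathcal{D}_\mathcal{I}$. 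Finally the chain map $f:F\to L$ is a $\mathcal{D}_\mathcal{I}$-preenvelope: given $g:F\to D$ with $D\in\mathcal{D}_\mathcal{I}$, push $g$ up the chain by transfinite recursion --- at a successor using that $\iota_{I_\alpha}$ is a $\mathcal{D}_{I_\alpha}$-preenvelope and $D\in\mathcal{D}_\mathcal{I}\subseteq\mathcal{D}_{I_\alpha}$, and at a limit using the universal property of the colimit --- obtaining the desired factorization through $L$. Corollary~\ref{PC00} then yields that $\mathcal{D}_\mathcal{I}$ is closed under flat covers.

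The only substantive point is the single-ideal step, precisely the use of perfectness of $R/I$ to promote the flat $R/I$-module $F/IF$ to a projective one; without this the cokernel $F\otimes_R C$ would only be filtered by flat $R/I$-modules, which need not lie in ${}^\perp\mathcal{D}_I$, and $\iota_I$ would fail to be a preenvelope into the class of flat modules. Everything else --- the propagation of flatness and divisibility along the transfinite chain and the coherent choice of the factorizations at limit stages --- is routine.
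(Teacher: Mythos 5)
Your proof is correct, and its overall architecture matches the paper's: in both cases the heart of the matter is that $F\to F\otimes_R Q_I$ is a special $\mathcal{D}_I$-preenvelope with flat codomain because the cokernel is filtered by projective $R/I$-modules, perfectness being exactly what upgrades ``flat over $R/I$'' to ``projective over $R/I$'' before Eklof's Lemma is applied. Where you genuinely diverge is in how the filtration quotients of that cokernel are identified. The paper realizes $X=(F\otimes_R Q_I)/F$ as a direct limit of copies of $Y=Q_I/R$, passes to the $I^n$-socle filtration of $X$, and uses a purity argument to exhibit each layer $X_n/X_{n-1}$ as a pure quotient of copies of the projective $R/I$-module $Y_n/Y_{n-1}$, hence as a flat $R/I$-module. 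You instead exploit flatness of $F$ once more to tensor the filtration $Y=\bigcup_n I^{-n}/R$ directly, so that each layer is $F\otimes_R(I^{-(n+1)}/I^{-n})$, a direct summand of $(F/IF)^{(\kappa)}$ with $F/IF$ flat, hence projective, over the perfect ring $R/I$; this is shorter and bypasses the socle/purity machinery entirely. The second divergence is the passage from a single ideal to the set $\mathcal{I}$: the paper disposes of this in one line, since $\mathcal{D}_\mathcal{I}=\bigcap_{I\in\mathcal{I}}\mathcal{D}_I$ and the property ``the domain of the flat cover of any $M\in\mathcal{C}$ lies in $\mathcal{C}$'' passes trivially to intersections (the flat cover of $M$ does not depend on the class under consideration). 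Your transfinite iteration also works, and in fact yields an explicit flat special $\mathcal{D}_\mathcal{I}$-preenvelope --- the cokernel of $F\to F_\lambda$ is filtered by the modules $F_\alpha\otimes_R(Q_{I_\alpha}/R)\in{}^\perp\mathcal{D}_{I_\alpha}\subseteq{}^\perp\mathcal{D}_\mathcal{I}$, so you could even skip the hands-on recursive factorization --- but it is considerably more work than the intersection observation.
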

	\begin{proof}
			First, we claim that we can without any loss of generality assume that $\mathcal{I}=\{I\}$ is a singleton. Indeed, if we prove the claim for all singleton sets, then the general statement follows from the simple observation that $\mathcal{D}_\mathcal{I}=\bigcap_{I \in \mathcal{I}} \mathcal{D}_I$.

			Let $Q_I=\bigcup_{n \in \mathbb{N}}I^{-n}$ be the ring epimorphic extension of $R$ from Lemma~\ref{L:fre}, and let $Y=\Coker(\varphi)$. Fix a flat $R$-module $F$, and consider the exact sequence:
			$$0 \rightarrow F \rightarrow F \otimes_R Q_I \rightarrow X \rightarrow 0.$$
			Any presentation of $F$ as a direct limit of free modules yields that $X$ is a direct limit of copies of $Y$, so there is a pure exact sequence:
			$$\epsilon: 0 \rightarrow K \xrightarrow{*} Y^{(\varkappa)} \rightarrow X \rightarrow 0.$$
			Denote by $Y_n=\Hom_R(R/I^n,Y)$ the $I^n$-socle of $Y$. Applying $\Hom_R(R/I^n,-)$ yields an exact sequence
			$$\epsilon': 0 \rightarrow K_n \rightarrow (Y_n)^{(\varkappa)} \rightarrow X_n \rightarrow 0,$$
			where $K_n$ and $X_n$ are the $I^n$-socles of $K$ and $X$, accordingly. This exact sequence is pure in $R/I^n$-Mod. Indeed, if $G$ is a finitely presented $R/I^n$-module, the sequence $\Hom_{R/I^n}(G,\epsilon')$ is naturally isomorphic to $\Hom_{R/I^n}(G,\Hom_R(R/I^n,\epsilon))$. We have a natural isomorphism (of complexes)
			$$\Hom{}_{R/I^n}(G,\Hom{}_R(R/I^n,\epsilon)) \simeq \Hom{}_{R}(G,\epsilon).$$
			Since $I^n$ is finitely generated, $G$ is a finitely presented $R$-module, and because $\epsilon$ is an exact sequence in $\ModR$, the resulting sequence is exact.

			Now apply the tensor functor $R/I \otimes_{R_{I^n}} - $ onto the pure exact sequence $\epsilon'$, which yields a pure exact sequence
			$$0 \rightarrow K_n/IK_n \xrightarrow{*} (Y_n/IY_{n})^{(\varkappa)} \rightarrow X_n/IX_{n} \rightarrow 0.$$
			Since $IY_n = Y_{n-1}$, it follows that $X_n/IX_n=X_n/X_{n-1}$. As this sequence is pure, and $Y_n/Y_{n-1}$ is a projective $R/I$-module by Lemma~\ref{PL19}, it follows that $X_n/X_{n-1}$ is a flat $R/I$-module. But $R/I$ is a perfect ring, and thus $X_n/X_{n-1}$ is actually a projective $R/I$-module, and therefore belongs to ${}^{\perp} \mathcal{D}_I$ by Lemma~\ref{PL19}. As $X$ is filtered by the set $\{X_n/X_{n-1} \mid n \in \mathbb{N}\}$, where $X_0=0$, we apply the Eklof Lemma (\cite[Lemma 6.2]{GT}) in order to infer that $X \in {}^{\perp} \mathcal{D}_I$. Therefore, the monomorphism $F \rightarrow F \otimes_R Q_I$ is a special $\mathcal{T}$-preenvelope of $F$, which is flat. Using Lemma~\ref{PL00}, we infer that $\mathcal{T}$ is closed under flat covers.

	\end{proof}

	\begin{lem}\label{PL17}
		Let $R$ be an almost perfect domain. Then:
		\begin{enumerate}
				\item[(i)] Let $X$ be a Thomason subset of $\Spec(R)$ not containing 0. Then there is a set of invertible ideals $\mathcal{I}$ such that $X=\bigcup_{I \in \mathcal{I}}V(I)$\footnote{In the terminology from \cite{SS}, this means that the Gabriel topology corresponding to $X$ in the sense of \cite[Lemma 2.10]{HS} has a basis of invertible ideals, and thus is \emph{perfect}.}.
			\item[(ii)] Every 1-tilting class is closed under flat covers.
		\end{enumerate}
	\end{lem}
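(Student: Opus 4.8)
The plan is to settle part~(i) by an explicit construction of invertible ideals refining Proposition~\ref{PP01}, and then to deduce part~(ii) by feeding the outcome into Proposition~\ref{PP18}. For~(i), recall from Lemma~\ref{PL15} that an almost perfect domain $R$ is $h$-local of Krull dimension at most $1$; in particular it has finite character and, being a domain, satisfies $\Spec(R)=\{(0)\}\cup\mSpec(R)$. If $R$ is a field there is nothing to prove, so assume otherwise. Then a Thomason set $X$ with $(0)\notin X$ is a set of maximal ideals, and since trivially $X=\bigcup_{\mathfrak m\in X}\{\mathfrak m\}$, it suffices to produce for each maximal ideal $\mathfrak m$ an invertible ideal $I_{\mathfrak m}$ with $V(I_{\mathfrak m})=\{\mathfrak m\}$; then $\mathcal I=\{I_{\mathfrak m}\mid \mathfrak m\in X\}$ does the job. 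In other words, the task is to strengthen the construction in Proposition~\ref{PP01} from finitely generated to invertible ideals.

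To build $I_{\mathfrak m}$ I would fix $0\neq s\in\mathfrak m$, so that $V(sR)=\{\mathfrak m,\mathfrak n_1,\dots,\mathfrak n_k\}$ is finite by finite character, and then correct the principal ideal $sR$ by a single extra generator. Put $\mathfrak a=\{r\in R\mid r/s\in R_{\mathfrak m}\}=sR_{\mathfrak m}\cap R$. Localizing at $\mathfrak n_i$ and using $R_{\mathfrak m}\otimes_R R_{\mathfrak n_i}=Q$ --- which holds because $(0)$ is the only prime of a one-dimensional domain contained in two distinct maximal ideals --- one gets $\mathfrak a_{\mathfrak n_i}=Q\cap R_{\mathfrak n_i}=R_{\mathfrak n_i}$, hence $\mathfrak a\not\subseteq\mathfrak n_i$ for each $i$. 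By prime avoidance I pick $t\in\mathfrak a\setminus\bigcup_i\mathfrak n_i$ and set $I_{\mathfrak m}=(s,t)$. Then $t\in sR_{\mathfrak m}$ yields $(I_{\mathfrak m})_{\mathfrak m}=sR_{\mathfrak m}$, while $t\notin\mathfrak n_i$ yields $(I_{\mathfrak m})_{\mathfrak n_i}=R_{\mathfrak n_i}$, and $(I_{\mathfrak m})_{\mathfrak p}=R_{\mathfrak p}$ for every remaining prime $\mathfrak p$ (as either $s\notin\mathfrak p$, or $\mathfrak p=(0)$). Hence $V(I_{\mathfrak m})=\{\mathfrak m\}$; moreover $I_{\mathfrak m}$ is finitely generated, contains the non-zerodivisor $s$ and is locally principal, so a local computation (localization commutes with $(-)^{-1}$ for finitely generated ideals) gives $I_{\mathfrak m}I_{\mathfrak m}^{-1}=R$, i.e.\ $I_{\mathfrak m}$ is invertible by Lemma~\ref{PL19}(i).

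For part~(ii), let $\mathcal T$ be a $1$-tilting class with associated Thomason set $X$ as in Theorem~\ref{T:HS}. A Thomason (hence specialization-closed) set containing the generic point $(0)$ of the irreducible space $\Spec(R)$ would be all of $\Spec(R)$, and then the finitely generated ideal $(0)$ would force $\Hom_R(R,R)=0$ in condition~(ii) of a characteristic sequence; so $(0)\notin X$. By the Remark after Theorem~\ref{T:HS}, $\mathcal T=\mathcal D_{\mathcal J}$ for any set $\mathcal J$ of finitely generated ideals with $X=\bigcup_{J\in\mathcal J}V(J)$, and by part~(i) we may take $\mathcal J=\mathcal I$ to consist of invertible ideals, each of which is proper and non-zero since every $V(I_{\mathfrak m})=\{\mathfrak m\}$ is non-empty and omits $(0)$. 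As $R$ is almost perfect, $R/I$ is a perfect ring for every such $I$, so Proposition~\ref{PP18} applies to $\mathcal I$ and shows $\mathcal T=\mathcal D_{\mathcal I}$ is closed under flat covers.

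The main obstacle is part~(i): everything hinges on the upgrade from finitely generated to invertible, which is exactly what makes Proposition~\ref{PP18} applicable. What makes the upgrade work is the interplay of finite character (making $V(sR)$ finite) with one-dimensionality (forcing $R_{\mathfrak m}\otimes_R R_{\mathfrak n}=Q$ for distinct maximal ideals), which together let us peel the superfluous maximal ideals off $V(sR)$ with one extra generator while leaving the local picture at $\mathfrak m$ untouched. Once~(i) is in hand, part~(ii) is a formal consequence of the tilting classification and Proposition~\ref{PP18}.
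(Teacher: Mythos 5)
Your proof is correct, but your argument for part (i) is genuinely different from the paper's. The paper also reduces to singleton sets $\{\mathfrak m\}$ via Proposition~\ref{PP01}, but then argues homologically: it considers the filter of all ideals $I$ with $V(I)\subseteq\{\mathfrak m\}$, observes that it has a basis of locally projective (hence flat) ideals, picks a proper flat member $J$, and uses the almost perfect hypothesis twice --- flat modules have projective dimension at most $1$ by Theorem~\ref{T:apd}, and modules of finite projective dimension have projective dimension at most $1$ by \cite[Proposition 3.2]{AJ} --- to conclude that $J$ is projective, hence finitely generated and invertible by Lemma~\ref{PL19}(i). Your construction instead exhibits an explicit two-generated invertible ideal $(s,t)$ by prime avoidance; the key local computations ($R_{\mathfrak m}\otimes_R R_{\mathfrak n}=Q$ for distinct maximal ideals of a one-dimensional domain, $(s,t)_{\mathfrak m}=sR_{\mathfrak m}$, $(s,t)_{\mathfrak n_i}=R_{\mathfrak n_i}$, and the standard fact that a finitely generated, locally principal, non-zero ideal of a domain is invertible) all check out. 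This route is more elementary --- it avoids \cite{AJ} entirely --- and it actually establishes the slightly stronger statement that (i) holds over any $h$-local domain of Krull dimension at most $1$; almost perfectness is then needed only in (ii), to guarantee that $R/I$ is perfect so that Proposition~\ref{PP18} applies. Your part (ii) (ruling out $(0)\in X$, replacing the generating set of ideals by the invertible ones from (i) via the remark after Theorem~\ref{T:HS}, and invoking Proposition~\ref{PP18}) coincides with the paper's argument.
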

	\begin{proof}
			$(i)$ First, by Lemma~\ref{PP01}, there is for each $\mathfrak{m} \in \mSpec(R)$ a finitely generated ideal $I^\mathfrak{m}$, such that $V(I^\mathfrak{m})=\{\mathfrak{m}\}$. It is then enough to settle $(i)$ for the singleton Thomason sets $X=\{\mathfrak{m}\}$. Let $\mathcal{I}$ be the set of all ideals $I$ such that $V(I) \subseteq \{\mathfrak{m}\}$. It is easy to see that $\mathcal{I}$ is a filter in the lattice of all ideals of $R$ closed under multiplication of ideals.
			
			Consider the set $I_\mathfrak{m}=\{I_\mathfrak{m} \mid I \in \mathcal{I}\}$ of all localization of ideals in $\mathcal{I}$ at $\mathfrak{m}$. This set is again clearly a filter of ideals of $R_\mathfrak{m}$ closed under multiplication, and it has to contain a proper ideal. As the ring $R_\mathfrak{m}$ is a local and 1-dimensional integral domain, $\mathcal{I}_\mathfrak{m}$ is necessarily the set of all non-zero ideals of $R_\mathfrak{m}$. As $\mathcal{I}_\mathfrak{n} = \{I_\mathfrak{n} \mid I \in \mathcal{I}\}$ is equal to $\{R_\mathfrak{n}\}$ for all $\mathfrak{n} \in (\mSpec(R) \setminus \{\mathfrak{m}\})$, it follows that there is a filter basis of $\mathcal{I}$ consisting of locally projective ideals. In particular, there is $J \in \mathcal{I}$ with $J \neq R$ and $J$ flat. As $R$ is almost perfect, this implies $\pd J \leq 1$.

			Finally, by \cite[Proposition 3.2]{AJ}, any $R$-module of finite projective dimension has projective dimension at most 1. Therefore, $\pd R/J = 1$, and thus $J$ is actually projective, and hence also finitely generated. Therefore, $X=\{\mathfrak{m}\}=V(J)$, where $J$ is an invertible ideal by Lemma~\ref{PL19}.

			$(ii)$ Let $\mathcal{T}$ be a 1-tilting class in $\ModR$. By Theorem~\ref{T:HS}, there is a set of finitely generated ideals $\mathcal{I}$ such that $\mathcal{T} = \mathcal{D}_\mathcal{I}$, and by (i), we can assume that this set consists of invertible ideals. Then the rest follows from Proposition~\ref{PP18}.
	\end{proof}
			Together, this yields the following characterization:
			\begin{thm}\label{T:main}
				Let $R$ be an integral domain. Then the following conditions are equivalent:
					\begin{enumerate}
						\item[(i)] $R$ is an almost perfect domain,
						\item[(ii)] the class $\mathcal{D}$ of all divisible modules is closed under flat covers,
						\item[(iii)] all 1-tilting classes in $\ModR$ are closed under flat covers, and
						\item[(iv)] all tilting classes in $\ModR$ are closed under flat covers.
					\end{enumerate}
			\end{thm}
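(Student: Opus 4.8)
The plan is to prove the cycle of implications $(i) \Rightarrow (iv) \Rightarrow (iii) \Rightarrow (ii) \Rightarrow (i)$, since by this point almost all of the substantial content sits in the preceding lemmas and each arrow is either essentially formal or a direct invocation of one of them.

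First I would handle $(i) \Rightarrow (iv)$, which is the only step requiring more than a one-line reference. Assuming $R$ is an almost perfect domain, Lemma~\ref{PL15} gives that $R$ is Matlis and of Krull dimension at most $1$. The Krull dimension bound lets me apply Lemma~\ref{PL89}, so that \emph{every} tilting class over $R$ is in fact a $1$-tilting class. Thus it is enough to show that every $1$-tilting class is closed under flat covers, and this is exactly Lemma~\ref{PL17}(ii). Hence $(iv)$ holds. The implication $(iv) \Rightarrow (iii)$ is immediate, since every $1$-tilting class is a tilting class.

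For $(iii) \Rightarrow (ii)$ I would recall from the opening of this subsection that $\mathcal{D} = (\bigoplus_{0 \neq r \in R} R/rR)^{\perp_\infty}$ is a $1$-tilting class by Theorem~\ref{T:finitetype} --- each $R/rR$ with $r \neq 0$ is strongly finitely presented of projective dimension $\leq 1$ via the presentation $0 \to R \xrightarrow{r} R \to R/rR \to 0$ --- so $(ii)$ is a special case of $(iii)$. Finally, $(ii) \Rightarrow (i)$ is precisely Lemma~\ref{PL03}.

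Since the deep work has already been farmed out to the lemmas, there is no genuine obstacle in the theorem itself; the only point that deserves attention is making sure the cycle really closes, i.e. that the jump from almost perfectness all the way to closure for \emph{all} tilting classes (not merely $1$-tilting ones) is bridged correctly by combining Lemma~\ref{PL15} with the reduction of Lemma~\ref{PL89}, rather than needing a separate argument for tilting classes of higher projective dimension.
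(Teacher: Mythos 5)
Your proposal is correct and uses the same ingredients as the paper's proof (Lemmas~\ref{PL03}, \ref{PL15}, \ref{PL89}, \ref{PL17}, and the fact that $\mathcal{D}$ is $1$-tilting); the only difference is cosmetic, namely that you run the cycle as $(i)\Rightarrow(iv)\Rightarrow(iii)\Rightarrow(ii)\Rightarrow(i)$, deducing the reduction to $1$-tilting classes directly from $(i)$ via Lemma~\ref{PL15}, whereas the paper goes $(i)\Rightarrow(iii)\Rightarrow(iv)$ and re-derives almost perfectness from $(iii)$ via Lemma~\ref{PL03} before invoking Lemma~\ref{PL89}. Both routes close the cycle correctly.
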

			\begin{proof}
					$(i) \rightarrow (iii):$ Lemma~\ref{PL17}.

					$(iii) \rightarrow (iv):$ The hypothesis implies in particular that the class $\mathcal{D}$ is closed under flat covers, which by Lemma~\ref{PL03} implies that $R$ is almost perfect, and thus 1-dimensional. Therefore, any tilting class is 1-tilting by Lemma~\ref{PL89}.

					$(iv) \rightarrow (ii)$: Trivial.

					$(ii) \rightarrow (i)$: Lemma~\ref{PL03}.
			\end{proof}
			We remark that Bazzoni proved in \cite{B2} the following related characterization: An integral domain $R$ is almost perfect if and only if the class $\mathcal{D}$ is enveloping. 
			
			Finally, we generalize Lemma~\ref{PL02} and provide a negative answer for multiplicative sets which are too large in a sense.			
			\begin{prop}\label{P:main3}
					Let $R$ be a commutative ring and $S$ a multiplicative subset of $R$ consisting of non-zero divisors. Then if the class $\mathcal{D}_S=\{M \in \ModR \mid M=sM ~\forall s \in S\}$ is closed under flat covers then $\pd RS^{-1} \leq 1$.
			\end{prop}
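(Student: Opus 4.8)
The plan is to mirror the proof of Lemma~\ref{PL02} and its auxiliary Lemma~\ref{PL20}, with the localization $RS^{-1}$ playing the role of the quotient field $Q$. First I would record that the projective resolution $0 \to R \xrightarrow{s} R \to R/sR \to 0$, available precisely because each $s \in S$ is a non-zero divisor, yields $\ext_R^1(R/sR, M) \cong M/sM$, whence $\mathcal{D}_S = \left( \bigoplus_{s\in S} R/sR \right)^{\perp_1} = \left( \bigoplus_{s\in S} R/sR \right)^{\perp_\infty}$, the latter equality because $\pd R/sR \le 1$. The same resolution shows each $R/sR$ is strongly finitely presented, so $\mathcal{D}_S$ is a $1$-tilting class by Theorem~\ref{T:finitetype}; in particular it is preenveloping by Theorem~\ref{T:abundant} and closed under direct summands, so Lemma~\ref{PL00} applies with $\mathcal{C} = \mathcal{D}_S$ and $\mathcal{F} = \mathcal{F}_0$.

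The core step is the analogue of Lemma~\ref{PL20}: if a flat module $F$ has a flat $\mathcal{D}_S$-preenvelope $f \colon F \to L$, then the localization map $\iota_F \colon F \to F \otimes_R RS^{-1}$ is again a $\mathcal{D}_S$-preenvelope, and its codomain is flat. The hypothesis on $S$ is used twice. Since $S$ acts injectively on $R$, it acts injectively on the flat module $F$, so $\iota_F$ is monic; and $F \otimes_R RS^{-1}$ is flat (a tensor product of flats) and $S$-divisible, hence lies in $\mathcal{D}_S$. On the other side, $L$ is flat (so $S$-torsion-free) and $S$-divisible, so $S$ acts bijectively on $L$, i.e. $L$ is an $RS^{-1}$-module; by the universal property of $RS^{-1}$, the map $f$ factors as $f = h\iota_F$ for some $h \colon F \otimes_R RS^{-1} \to L$. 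Then for any $l \colon F \to D$ with $D \in \mathcal{D}_S$, writing $l = g'f$ (possible since $f$ is a $\mathcal{D}_S$-preenvelope) gives $l = (g'h)\iota_F$, so $\iota_F$ is a $\mathcal{D}_S$-preenvelope.

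To finish, assume $\mathcal{D}_S$ is closed under flat covers. By Lemma~\ref{PL00} every flat module, in particular each free module $R^{(\varkappa)}$, admits a flat $\mathcal{D}_S$-preenvelope, so by the previous paragraph the canonical map $R^{(\varkappa)} \to (RS^{-1})^{(\varkappa)}$ is a $\mathcal{D}_S$-preenvelope for every cardinal $\varkappa$. As in Lemma~\ref{PL02}, this says that $RS^{-1}$ generates the tilting class $\mathcal{D}_S$, and \cite[Theorem 1.1]{AHT} then yields $\pd RS^{-1} \le 1$. Alternatively, and avoiding \cite{AHT}: as every $s \in S$ is invertible on $RS^{-1}$, any $g \in \End_R(RS^{-1})$ with $g\iota_R = \iota_R$ fixes the identity element and hence equals $\mathrm{id}$, so $\iota_R \colon R \to RS^{-1}$ is a monic $\mathcal{D}_S$-envelope; by the Wakamatsu Lemma (\cite[Lemma 5.13]{GT}) it is special, so $RS^{-1}/R = \Coker(\iota_R) \in {}^\perp\mathcal{D}_S \subseteq \mathcal{P}_1$ (the inclusion since $({}^\perp\mathcal{D}_S,\mathcal{D}_S)$ is a $1$-tilting cotorsion pair), and the exact sequence $0 \to R \to RS^{-1} \to RS^{-1}/R \to 0$ forces $\pd RS^{-1} \le 1$.

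The main obstacle I anticipate is the analogue of Lemma~\ref{PL20}: one has to be careful that the non-zero-divisor condition on $S$ is exactly what makes $\iota_F$ injective and turns a flat $S$-divisible module into a genuine $RS^{-1}$-module, so that the required factorization through $\iota_F$ exists. In the first route the remaining external input is \cite[Theorem 1.1]{AHT} in the shape ``$RS^{-1}$ generates $\mathcal{D}_S$ implies $\pd RS^{-1} \le 1$''; the Wakamatsu variant trades this for the elementary observation that $R \to RS^{-1}$ is automatically a $\mathcal{D}_S$-envelope once it is a $\mathcal{D}_S$-preenvelope.
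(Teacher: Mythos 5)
Your proof is correct and follows essentially the same route as the paper: establish the analogue of Lemma~\ref{PL20} for $RS^{-1}$ (a flat $S$-divisible module is an $RS^{-1}$-module, so a flat $\mathcal{D}_S$-preenvelope factors through the localization map), deduce that $R^{(\varkappa)} \to (RS^{-1})^{(\varkappa)}$ is a $\mathcal{D}_S$-preenvelope, and invoke \cite[Theorem 1.1]{AHT}. Your alternative ending via the Wakamatsu Lemma and the inclusion ${}^\perp\mathcal{D}_S \subseteq \mathcal{P}_1$ is a valid self-contained variant (the paper uses the same device in Lemma~\ref{PL01}), but the main argument coincides with the paper's.
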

			\begin{proof}
					Suppose that $\mathcal{D}_S$ is closed under flat covers. Then by Corollary~\ref{PC00}, there is a $\mathcal{D}_S$-preenvelope $f: R \rightarrow L$ such that $L$ is flat. Since $L$ is flat and $S$-divisible, $L$ is a $\mathcal{D}_S$-module. Then we can prove as in Lemma~\ref{L:fre} that the natural embedding $\iota: R \rightarrow RS^{-1}$ is also a $\mathcal{D}_S$-preenvelope. It follows that also $R^{(\varkappa)} \rightarrow (RS^{-1})^{(\varkappa)}$ is a $\mathcal{D}_S$-preenvelope for any cardinal $\varkappa$, and thus $RS^{-1}$ generates the class $\mathcal{D}_S$ of all $S$-divisible modules. Consequently, \cite[Theorem 1.1]{AHT} implies that $\pd RS^{-1} \leq 1$.
			\end{proof}
			\begin{remark}The condition $\pd RS^{-1} \leq 1$ holds whenever $S$ is countable, as then $RS^{-1}$ can be filtered by the set $\{R\} \cup \{R/sR \mid s \in S\}$. On the other hand, \cite[Theorem 1.1]{AHT} says that the condition $\pd RS^{-1} \leq 1$ implies that $RS^{-1}$ is a direct sum of countably presented modules. A counterexample to Question~\ref{Q00} can be then obtained for example by taking any uncountable multiplicative subset $S$ of a valuation domain $R$, as this yields an indecomposable uncountably generated module $RS^{-1}$, and thus $\pd RS^{-1} > 1$. Finally, we remark that Proposition~\ref{P:main3} can be generalized rather straightforwardly to classes of divisibility by invertible ideals (cf. \cite[Theorem 5.4]{Hrb}.\end{remark}
\subsection{Modules divisible by a regular element}
In this last section, we prove a kind of a weak converse to Theorem~\ref{T:main} for small multiplicative sets --- if the class of modules divisible by a single regular element $s \in R$ is closed under flat covers, the ring quotient $R/sR$ has to be rather close to being perfect. 
	\begin{thm}\label{locnoeth}
			Let $R$ be a commutative ring and $s \in R$ a regular element. Then if the class $\mathcal{D}_s$ of all $s$-divisible $R$-modules is closed under flat covers then the ring $\bar{R}:=R/sR$ has the following property:
			\begin{enumerate}
					\item[\emph{(P)}] every countably presented flat $\bar{R}$-module admits a filtration by ideals of $\bar{R}$.
			\end{enumerate}
	\end{thm}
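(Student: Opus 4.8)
The plan is as follows. Fix a countably presented flat $\bar R$-module $N$; the goal is to build a filtration of $N$ by ideals of $\bar R$. First I would lift $N$ to $R$. By the Govorov--Lazard theorem, $N$ is a filtered colimit of finitely generated free $\bar R$-modules, and countable presentation lets us arrange this as a chain $\bar R^{k_0}\to\bar R^{k_1}\to\cdots$ indexed by $\omega$; lifting the connecting matrices to $R$ and taking the colimit $F$ of the resulting chain $R^{k_0}\to R^{k_1}\to\cdots$ produces a countably presented flat $R$-module with $F/sF=F\otimes_R\bar R\cong N$, and the mapping telescope gives a short exact sequence $0\to\bigoplus_n R^{k_n}\xrightarrow{\Delta}\bigoplus_n R^{k_n}\to F\to 0$ (in particular $\pd_R F\le 1$). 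Set $R_s=R[s^{-1}]$ and let $\iota\colon F\hookrightarrow F_s:=F\otimes_R R_s$ be the localization map, which is injective since $s$ is a non-zero-divisor on the flat module $F$.

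Next I would use the hypothesis to pin down $\iota$. Since $\mathcal D_s=(R/sR)^{\perp_\infty}$ is a tilting class, Corollary~\ref{PC00} yields a flat $\mathcal D_s$-preenvelope $g\colon F\to L$ with $L$ flat. As $L$ is flat and $s$-divisible, $s$ acts invertibly on $L$, so $L$ is an $R_s$-module and $g$ factors through $\iota$; since moreover $F_s\in\mathcal D_s$, a short diagram chase then shows that $\iota$ is itself a $\mathcal D_s$-preenvelope. Because $F_s$ is generated by $\iota(F)$ as an $R_s$-module and every $R$-endomorphism of $F_s$ is automatically $R_s$-linear, $\iota$ is even a $\mathcal D_s$-envelope; being a monic envelope, it is special by the Wakamatsu Lemma (\cite[Lemma 5.13]{GT}), so $W:=\Coker\iota$ lies in ${}^\perp\mathcal D_s$. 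Tensoring $0\to R\to R_s\to R_s/R\to 0$ with the flat module $F$ identifies $W=F\otimes_R(R_s/R)$.

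Then I would extract the filtration. Tensoring the telescope resolution of $F$ with $R_s/R$ (using $\tor_1^R(F,R_s/R)=0$) gives an exact sequence $0\to U\to U\to W\to 0$ with $U=\bigoplus_n(R_s/R)^{k_n}$. Here $U\in\mathcal D_s$ (since $R_s/R=s(R_s/R)$) while $W\in{}^\perp\mathcal D_s$, so $\ext_R^1(W,U)=0$ and the sequence splits: $W$ is a direct summand of $U$, and $U$ is genuinely $\{R/sR\}$-filtered via the fractional submodules $s^{-n}R/R$. Applying $\Hom_R(R/sR,-)$ to the splitting $W\mid U$, and using that $R/sR$ is finitely presented and $F$ flat --- so that $\Hom_R(R/sR,F\otimes_R R_s/R)\cong F\otimes_R\Hom_R(R/sR,R_s/R)\cong F\otimes_R R/sR\cong N$, while $\Hom_R(R/sR,U)\cong\bigoplus_n\bar R^{k_n}$ is free over $\bar R$ --- I obtain that $N$ is a direct summand of a free $\bar R$-module. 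Intersecting $N$ with the coordinate filtration of that free module (whose successive quotients are copies of $\bar R$) then produces a continuous filtration of $N$ whose successive quotients embed into $\bar R$, i.e.\ are ideals of $\bar R$; this is property (P). (In fact the argument shows the a priori stronger statement that every countably presented flat $\bar R$-module is projective over $\bar R$.)

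The step I expect to be the crux is the middle one: deducing from ``$\mathcal D_s$ is closed under flat covers'' that the \emph{canonical} map $\iota\colon F\to F_s$ is a special $\mathcal D_s$-preenvelope. This is precisely where the hypothesis enters --- without it $\iota$ need not be a $\mathcal D_s$-preenvelope at all, since a homomorphism from $F$ into an $s$-divisible module carrying $s$-torsion need not factor through the $s$-adic localization --- and it rests on the two facts that flat $s$-divisible modules are $R_s$-modules and that a monic envelope is special. A secondary technical point is the natural isomorphism $\Hom_R(R/sR,F\otimes_R M)\cong F\otimes_R\Hom_R(R/sR,M)$ for $F$ flat and $R/sR$ finitely presented, used to recover $N$ from $W$ at the end.
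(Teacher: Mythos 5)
Your argument is correct up to and including the identification of $W=\Coker(F\to F[s^{-1}])$ as an object of ${}^\perp\mathcal{D}_s$ --- this part coincides with the paper's proof, which reaches the same conclusion by quoting Lemma~\ref{L:fre} and the Wakamatsu Lemma. Where you genuinely diverge is the endgame. The paper, knowing only that $W\in\mathcal{A}$ for the cotorsion pair $(\mathcal{A},\mathcal{D}_s)$, invokes Theorem~\ref{T:abundant} to present $W$ as a direct summand of an abstract $\{R,R/sR\}$-filtered module $M$, embeds $\bar F=\Hom_R(\bar R,W)$ into $M$, and intersects with the filtration; this only yields that the successive quotients are \emph{submodules} of $R/sR$, i.e.\ ideals, which is exactly property (P). You instead exploit the concrete telescope presentation $0\to U\to U\to W\to 0$ with $U=\bigoplus_n(R_s/R)^{k_n}\in\mathcal{D}_s$: since $\ext_R^1(W,U)=0$, this sequence splits, so $W$ is a direct summand of $U$, and applying $\Hom_R(\bar R,-)$ (legitimately, since $\bar R$ is finitely presented and commutes with the direct sum and with $F\otimes_R-$) exhibits $\bar F$ as a direct summand of the free module $\bigoplus_n\bar R^{k_n}$. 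Each individual step here checks out, and your conclusion is strictly stronger than (P): it says every countably presented flat $\bar R$-module is projective, i.e.\ $\bar R$ is a perfect ring. You should be aware of how much this buys: combined with (the evident generalization of) Proposition~\ref{PP18}, it would give the clean equivalence ``$\mathcal{D}_s$ is closed under flat covers iff $R/sR$ is perfect,'' subsuming Theorem~\ref{T:main2} entirely and settling negatively the case $R/sR\cong k[x,y]_{(x,y)}/(x^2,xy)$ that the paper's final Example explicitly declares out of reach of its methods (there the flat module $T[y^{-1}]\cong k(y)$ is filtered by ideals but is not projective). That the author had the direct-limit presentation of $W$ by copies of $R_s/R$ in hand (it appears verbatim in the proofs of both Proposition~\ref{PP18} and Theorem~\ref{locnoeth}) and nevertheless settled for the weaker filtration statement is a reason to scrutinize the splitting step with extra care before claiming the improvement; but I do not see a gap in it --- the only inputs are that $U$ is $s$-divisible (direct sums of $s$-divisible modules are $s$-divisible) and that $W\in{}^\perp\mathcal{D}_s$, which the paper itself establishes.
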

	\begin{proof}
			Put $\bar{R}=R/sR$ and fix a countably presented flat $\bar{R}$-module $\bar{F}$. By Lazard's Theorem (see \cite[Corollary 2.23]{GT}), $\bar{F}$ is a direct limit of a directed system 
			$$\bar{R}^{n_1} \xrightarrow{\bar{A}_1} \bar{R}^{n_2} \xrightarrow{\bar{A}_2} \bar{R}^{n_3} \xrightarrow{\bar{A}_3} \cdots$$
			of finitely generated free modules. Then the structure maps $\bar{A}_n$ of this system are matrices over $\bar{R}$, and thus we can lift this system to a directed system 
			$$R^{n_1} \xrightarrow{A_1} R^{n_2} \xrightarrow{A_2} R^{n_3} \xrightarrow{A_3} \cdots$$ 
			such that its limit $F$ satisfies $F \otimes_R \bar{R} \simeq \bar{F}$. Suppose towards a contradiction that $\mathcal{D}_s$ is closed under flat covers. Then by Corollary~\ref{PC00}, there is a flat $\mathcal{D}_s$-preenvelope of $F$. Lemma~\ref{L:fre} then shows that the embedding $F \rightarrow F \otimes_R Q_I$ is a $\mathcal{D}_I$-envelope, and thus in particular, a special $\mathcal{D}_I$-preenvelope by the Wakamatsu Lemma. Hence, the cokernel $C$ of the localization map $F \rightarrow F[s^{-1}]$ belongs to the class $\mathcal{A}$ of the cotorsion pair $(\mathcal{A},\mathcal{D}_s)$.

			Consider the exact sequence
			$$0 \rightarrow R \rightarrow R_s \rightarrow Y \rightarrow 0,$$
			where $Y = \bigcup_{n > 0} R/s^nR$. Applying the tensor functor $- \otimes_R F$ yields a presentation of $C$ as the direct limit of the sequence
			$$Y^{n_1} \xrightarrow{A_1 \otimes_R Y} Y^{n_2} \xrightarrow{A_2 \otimes_R Y} Y^{n_3} \xrightarrow{A_3 \otimes_R Y} \cdots$$
			Since $\Hom_R(\bar{R},Y) \simeq \bar{R}$, and the functor $\Hom_R(\bar{R},-)$ commutes with direct limits, the module $\Hom_R(\bar{R},C) = \{c \in C \mid sc=0\} \subseteq C$ is isomorphic to a direct limit of sequence
			$$\bar{R}^{n_1} \xrightarrow{\bar{A}_1} \bar{R}^{n_2} \xrightarrow{\bar{A}_2} \bar{R}^{n_3} \xrightarrow{\bar{A}_3} \cdots.$$
			Therefore, $\Hom_R(\bar{R},C) \simeq \bar{F}$, and thus $\bar{F}$ is a submodule of $C$. Since $C \in \mathcal{A}$, we have by Theorem~\ref{T:abundant} that $C$ is a direct summand of a $\{R, R/sR\}$-filtered module $M$. Let $M_\alpha, \alpha<\lambda$ be a filtration of $M$ such that $M_{\alpha+1}/M_\alpha$ is isomorphic to either $R/sR$ or $R$ for any $\alpha<\lambda$, and put $N_\alpha = M_\alpha \cap \bar{F}$. Then $\bar{F} = \bigcup_{\alpha < \lambda} N_\alpha$, and 
			$$\frac{N_{\alpha+1}}{N_\alpha} = \frac{M_{\alpha+1} \cap \bar{F}}{M_\alpha \cap \bar{F}} = \frac{M_{\alpha+1} \cap \bar{F}}{M_\alpha \cap (M_{\alpha+1} \cap \bar{F})} \simeq \frac{(M_{\alpha+1} \cap \bar{F}) + M_\alpha}{M_\alpha} \subseteq \frac{M_{\alpha+1}}{M_\alpha}.$$
			Therefore, $\bar{F}$ is a $\bar{R}$-module with a filtration $\bar{F} = \bigcup_{\alpha<\lambda} N_\alpha$, where $N_{\alpha+1}/N_\alpha$ is isomorphic to submodule of either $R$ or $R/sR$ for any $\alpha < \lambda$. Since $\bar{F}$ is $s$-torsion as an $R$-module, $N_{\alpha+1}/N_\alpha$ is zero whenever $M_{\alpha+1}/M_\alpha \simeq R$. Therefore, only submodules of $R/sR$ occur as non-zero subfactors in the filtration $\bar{F} = \bigcup_{\alpha<\lambda} N_\alpha$, and thus $\bar{F}$ is indeed filtered by ideals of the ring $R/sR$.
	\end{proof}
	\begin{thm}\label{T:main2}
		Let $R$ be a commutative ring and $s \in R$ a regular element. Then the class $\mathcal{D}_s$ of all $s$-divisible modules is not closed under flat covers if any of the following conditions holds for the quotient ring $\bar{R}=R/sR$:
\begin{enumerate}
		\item[(i)] $\bar{R}$ is noetherian and contains a regular (and non-invertible) element,
	\item[(ii)] $\bar{R}$ is a non-field integral domain with countably presented ring of quotients.
\end{enumerate}
	\end{thm}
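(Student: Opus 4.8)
The plan is to derive everything from Theorem~\ref{locnoeth}: if $\mathcal{D}_s$ were closed under flat covers, then $\bar{R}=R/sR$ would satisfy property (P), so under either hypothesis it suffices to exhibit a countably presented flat $\bar{R}$-module that is \emph{not} filtered by ideals of $\bar{R}$. In both cases I would take the witness to be a localization of $\bar{R}$. For (ii) this is quick. Take $\bar{F}=Q(\bar{R})$, which is flat (being a localization of $\bar{R}$) and countably presented by hypothesis. If it were filtered by ideals of $\bar{R}$, I would discard the zero subquotients and use that over the domain $\bar{R}$ the torsion-free rank is additive along the filtration; since $Q(\bar{R})$ has rank $1$ and every nonzero ideal of $\bar{R}$ has rank $1$, the filtration would have exactly one nonzero subquotient, so $Q(\bar{R})\cong J$ for some nonzero ideal $J\subseteq\bar{R}$. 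But $Q(\bar{R})$ is torsion-free and divisible, hence a $Q(\bar{R})$-vector space, so $J$ would be a nonzero $Q(\bar{R})$-subspace of $\bar{R}$; choosing $0\neq a\in J$ gives $Q(\bar{R})=Q(\bar{R})\,a\subseteq J\subseteq\bar{R}$, forcing $\bar{R}=Q(\bar{R})$, which contradicts that $\bar{R}$ is not a field.

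For (i) I would take $\bar{F}=\bar{R}[t^{-1}]$, where $t$ is the given regular non-unit. This is flat, countably presented (it is $\varinjlim(\bar{R}\xrightarrow{\,t\,}\bar{R}\xrightarrow{\,t\,}\cdots)$), and not finitely generated over $\bar{R}$, since otherwise $\bar{R}[t^{-1}]=t^{-N}\bar{R}$ for some $N$ and then $t$ would be a unit. Suppose $\bar{F}$ is filtered by ideals of $\bar{R}$. The key device is to pass to a carefully chosen localization. Choose a prime $\mathfrak{p}$ minimal over $(t)$; by Krull's principal ideal theorem $\operatorname{ht}\mathfrak{p}\leq 1$, and because $t$ is regular it lies outside every associated---hence every minimal---prime of $\bar{R}$, so $\mathfrak{p}$ is not minimal and $\operatorname{ht}\mathfrak{p}=1$. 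Then $R':=\bar{R}_\mathfrak{p}$ is a one-dimensional local noetherian ring in which $t$ is again a regular non-unit, and since localization is exact, commutes with unions, and carries ideals of $\bar{R}$ to ideals of $R'$, the module $M':=\bar{F}_\mathfrak{p}=R'[t^{-1}]$ is filtered by ideals of $R'$.

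Now $\Spec(R'[t^{-1}])$ consists exactly of the minimal primes of $R'$, because in a one-dimensional local ring the only prime containing the regular element $t$ is the maximal ideal; hence $R'[t^{-1}]$ is a zero-dimensional (hence artinian) noetherian ring, and therefore equals its own total ring of quotients. In fact $R'[t^{-1}]$ equals the total ring of quotients $T'$ of $R'$: inverting $t$ already inverts every regular element of $R'$, as such an element maps to a regular element of the artinian ring $R'[t^{-1}]$, hence to a unit. Tensoring the filtration of $M'=T'$ with the flat $R'$-module $T'$ yields a filtration of $T'\otimes_{R'}T'=T'$ by the ideals $J'_\alpha T'$ of $T'$, and since $R'\hookrightarrow T'$ a subquotient $J'_\alpha T'$ vanishes exactly when $J'_\alpha$ does; as $T'$ has finite length over itself, only finitely many $J'_\alpha$ are nonzero. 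Hence $M'$ admits a \emph{finite} filtration by finitely generated ideals of $R'$, so $M'$ is finitely generated over $R'$---contradicting that $R'[t^{-1}]$ is not finitely generated over $R'$. This contradiction shows $\bar{R}$ fails property (P), so $\mathcal{D}_s$ is not closed under flat covers.

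I expect the main obstacle to be the last step of (i): one has to localize at the \emph{right} prime---a height-one prime minimal over $(t)$, not just some maximal ideal containing $t$---precisely so that the localized module $R'[t^{-1}]$ becomes the artinian total quotient ring of $R'$, which is what converts ``filtered by ideals'' into a finite-length count and hence into finite generation. The remaining ingredients (behaviour of torsion-free rank and of length along transfinite filtrations, exactness of localization, and the identification $R'[t^{-1}]=T'$) are routine.
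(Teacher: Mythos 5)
Your proposal is correct, and both witnesses you pick ($\bar{R}[t^{-1}]$ for (i), $Q(\bar{R})$ for (ii)) are exactly the ones the paper uses to refute property (P) of Theorem~\ref{locnoeth}; the difference lies in how you kill the putative filtrations. For (ii) your rank count is only cosmetically different from the paper's argument: the paper shows each nonzero subquotient ideal must be divisible (because $Q(\bar{R})$ is divisible and torsion-free and every nonzero element of $\bar{R}$ acts injectively on each ideal) and then derives the same contradiction from a nonzero divisible ideal in a non-field domain. For (i) you genuinely diverge. The paper localizes at a maximal ideal containing the regular non-unit $y$, observes that every subquotient ideal $I_\alpha$ of a filtration of $\bar{R}[y^{-1}]$ satisfies $I_\alpha = yI_\alpha$ (the same ``divisibility passes to torsion-free subquotients'' observation as in (ii)), and finishes in one line by Nakayama. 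Your route --- localizing at a height-one prime $\mathfrak{p}$ minimal over $(t)$ so that $R'[t^{-1}]$ is the artinian total quotient ring $T'$ of $R'=\bar{R}_\mathfrak{p}$, tensoring the filtration up to $T'$, and using finiteness of length to force the filtration to be finite and hence $R'[t^{-1}]$ finitely generated --- is longer but sound: regularity of $t$ survives the localization since $t$ avoids all associated primes, $J'_\alpha T'=0$ iff $J'_\alpha=0$ because $T'$ inverts only regular elements, and finite length bounds the number of nonzero subquotients. What your version buys is that it bypasses the divisibility-propagation lemma entirely in case (i), replacing it by a length/finite-generation obstruction; what it costs is the extra care in choosing the prime (as you yourself note, an arbitrary maximal ideal containing $t$ would not make $R'[t^{-1}]$ artinian, whereas it would still suffice for the paper's Nakayama argument).
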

	\begin{proof}
	In both cases, we show that the property (P) is not satisfied for $\bar{R}$. By localization, it is enough to show this in the case where $\bar{R}$ is local.
	\begin{enumerate}
	\item[(i)] Let $y$ be a regular element of $\bar{R}$. The localization $F:=\bar{R}[y^{-1}]$ is then a countably presented flat $\bar{R}$-module. Suppose that $F$ admits a filtration $F=\bigcup_{\alpha<\lambda}S_\alpha$ such that $S_{\alpha+1}/S_\alpha$ is isomorphic to an ideal $I_\alpha$ of $\bar{R}$ for all $\alpha<\lambda$. Because $s$ is regular, it is also regular on each ideal $I_\alpha$. Since $F$ is $s$-divisible, it follows that $I_\alpha = s I_\alpha$ for each $\alpha<\lambda$. But as $\bar{R}$ is local and noetherian, this would imply $I_\alpha=0$ for each $\alpha<\lambda$ by Nakayama Lemma. That is a contradiction.
	\item[(ii)] Let $\bar{F}$ be the ring of quotients of the domain $\bar{R}$. Then $\bar{F}$ is a countably presented flat $\bar{R}$-module. Towards a contradiction, suppose that $\bar{F}$ is filtered by ideals of $\bar{R}$. Since $\bar{F}$ is not projective, similar argument as in (i) shows that there is a proper non-zero ideal $I$ of $\bar{R}$ such that $I$ is divisible. If $r \in I$ is non-zero, then $r \in rI$, and thus $r=ri$ for some $i \in I$. This means that $(1-i)r=0$, a contradiction with $r \neq 0$.
	\end{enumerate}
	\end{proof}
	\begin{example}
			In this example we would like to describe one example where our technique seems to fall short. Let $k$ be a field and $S=k[x,y]_{(x,y)}$, the localization of the ring of polynomials in two variables at the maximal ideal $(x,y)$. Let $T=S/(x^2,xy)$. Then $T$ is not zero dimensional (equivalently, not artinian, or equivalently, not perfect), as $(x) \subsetneq (x,y)$ is an proper inclusion of prime ideals, but it is easy to see that every non-invertible element of $T$ is a zero-divisor. That is, if $T$ occurs as a ring isomorphic to a quotient $R/sR$ of some commutative (noetherian) domain $R$ over a regular element $s$, the closure of the class of all $s$-divisible $R$-module is out of reach of both Proposition~\ref{PP18} and Theorem~\ref{T:main2}.

			To illustrate the failure of the proof of Theorem~\ref{T:main2}, let $F=T[y^{-1}]$ be the localization of $T$ at the non-nilpotent element $y \in T$. Since $\Ann(y)=(x)$, we infer that $F \simeq k[y][y^{-1}] \simeq k(y)$ as $T$-modules. Then the flat $T$-module $F$ admits a filtration by the ideals $(x),(y)$ of the ring $T$. To see this, note that $(y)$ is isomorphic to the submodule of $F$ generated by the image of the unit, and that $(x)$ is a simple $T$-module (and thus also a simple $F$-module).
	\end{example}
	\begin{example}
			All the divisibility classes considered in this paper had a flat ring epimorphism associated (in the sense of \cite[\S IX - \S XI]{SS}), which was crucial for our arguments. In this final example, we briefly discuss the situation of the possibly simplest ideal of grade higher than 1. Let $k$ be a field, and $R=k[x,y]$. Then $I=(x,y)$ is an ideal such that $\ext_R^i(R/I,R)=0$ for $i=0,1$, and thus in particular, is not invertible. We show that the regular module $R$ (and thus also any free $R$-module) does admit a flat $\mathcal{D}_I$-preenvelope. On the other hand, there is no flat ring epimorphism associated to the class $\mathcal{D}_I$ of all $I$-divisible modules (see \cite[\S X, Exercise 3]{SS}), and thus our approach of investigating $\mathcal{D}_I$ preenvelopes of flat modules does not apply here. 
			
			Indeed, let $A=\begin{pmatrix} x \\ y \end{pmatrix}$ be a matrix inducing an inclusion $R \xhookrightarrow{A} R^2$, and denote by $A^{\oplus k}$ a diagonal-block matrix of $k$ copies of the matrix $A$ for any $k>0$. Consider the well-ordered directed system
					$$R \xhookrightarrow{A} R^2 \xhookrightarrow {A^{\oplus 2}} R^4 \xhookrightarrow {A^{\oplus 4}} \cdots R^{2^k} \xhookrightarrow{A^{\oplus 2^k}} R^{2^{k+1}} \xhookrightarrow{} \cdots,$$
			and let $M$ denote its limit. Then $M$ is a flat $R$-module, and there is an inclusion $f: R \xhookrightarrow{A} M$ induced by the first map of the system. The cokernel of $f$ admits a filtration $\bigcup_{n>0}C_n$ such that $C_{n+1}/C_n$ is isomorphic to a direct sum of $2^n$ copies of the module $\Coker(A) = R^2/(x,y)R$. Note that $(R^2/(x,y)R)^\perp = \{N \in \ModR \mid N=IN\}$, and also that $M=IM$ from the construction. Therefore, $f: R \rightarrow M$ is a special $\mathcal{D}_I$-preenvelope of $R$. (This is a special case of the more general construction of the ``Fuchs-Salce'' tilting modules from \cite[\S 4]{Hrb}).
	\end{example}
	
\end{document}